\newtheorem{theorem}{Theorem}[section]
\newtheorem{thm}{Theorem}%[section]
\newtheorem{claim}[theorem]{Claim}
\newtheorem{conj}[theorem]{Conjecture}
\newtheorem{cor}[theorem]{Corollary}
\newtheorem{obs}[theorem]{Observation}
\newtheorem{problem}[theorem]{Problem}
\newtheorem{defi}[theorem]{Definition}
\title{Path-search in the pyramid and in other graphs}
\author{D\'aniel Gerbner \and Bal\'azs Keszegh}
\begin{document}

\maketitle

\begin{abstract}
We are given an acyclic directed graph with one source, and a subset of its edges which
contains exactly one outgoing edge for every non-sink vertex. These edges
determine a unique path from the source to a sink. We can think of it as
a switch in every vertex, which determines which way the water
arriving to that vertex flows further.

We are interested in determining either the sink the flow arrives,
or the whole path, with as few questions as possible. The
questions we can ask correspond to the vertices of the graph, and
the answer describes the switch, i.e. tells which outgoing edge is
in our given subset.

Originally the problem was proposed by Soren Riis (who posed the question for pyramid graphs) in the following more general form. We are given a natural
number $k$, and $k$ questions can be asked in a round. The goal is
to minimize the number of rounds. We completely solve this problem for complete $t$-ary trees. Also, for pyramid
graphs we present some non-trivial partial results.
\end{abstract}

\section{Introduction}

In this paper we consider the following problem. We are given
a directed acyclic (multi)graph $G$ with one source. We are given
a set $E^* \subset E$ such that for every non-sink vertex $v\in V$
there is exactly one outgoing edge in $E^*$. It determines a path
from the source to a sink. We can think of it as a switch in every
vertex, which determines which way the water (coming from the source) arriving to that
vertex flows further. We imagine such graphs such that the source is on the top and the sinks are on the bottom. In the rest of the paper by a graph we always mean such a directed graph except when noted.

We are interested in determining either the whole path $P=P(E^*)$ or only the sink the flow arrives, i.e. the last vertex of $P(E^*)$. The questions we can ask correspond
to the vertices of the graph, and the answer describes the switch,
i.e. tells which outgoing edge is in $E^*$.

\begin{defi}
Let $pa(G)=pa_1(G)$ (resp. $si(G)=si_1(G)$) be the minimal number of questions we need
to determine the path (resp. the sink).
\end{defi}

Clearly $si(G) \le pa(G)$, but these can be very far from each
other, for example if there is only one sink, then $si(G)=1$ but
$pa(G)$ can be arbitrarily high.

Soren Riis proposed the following problem. What is the minimal
number of rounds we need if in one round we can ask $k$ questions
and our aim is to determine the path or the sink in the pyramid
graph (for the definition see Section \ref{pypa})? This
motivates the study of the following more general question.
%We call a directed path starting at the root and ending in one of the
%sinks a {\em pyramid path}.

\begin{defi}
Fixing a $k$, $pa_k(G)$ is the minimal number of rounds we need to
determine the path, if in each round we can ask $k$ questions.
Similarly, $si_k(G)$ is the minimal number of rounds we need to
determine the sink, if in each round we can ask $k$ questions.
\end{defi}

As an additional motivation, let us consider a game and two
possible strategies given as black boxes, i.e. we can evaluate in
every possible state what the strategies do. To represent the game
with a directed acyclic graph, it is enough represent every valid
(turn number, state) pair with a different vertex, and the valid
steps are represented by the directed edges. For every finite
game, this is a finite acyclic directed graph. Now we can simulate
a match between two players using the respective strategies. To
find out which strategy is the winner of a match, one can go step
by step and ask what happens in every actual situation. Note that
it does not matter if the black box strategies use randomness or
not. This process is a very important step of Monte-Carlo type
algorithms, when one needs to quickly simulate matches between
strategies \cite{brugmann, montecarlo, ksz}.

However, suppose we are given multiple processors. A solution to
the original sink-search problem for the above defined graph gives
an optimal parallel algorithm to determine the final state of the
match. Clearly, such an algorithm determines the winner of the
match as well. Summarizing, investigating our problem can help
find faster algorithms for simulating matches, if parallel
computations are allowed. (We note that we do not think it could
help a lot. We were able to find an example where the optimal
algorithm is much faster than the trivial one, but it was a very
special graph. Also in case of games usually the graph is very
large, which makes it hard to find a good non-trivial algorithm
for our problem.) For further motivations related to random walks
see the beginning of Section \ref{pypa}.

In the next section we examine $si(G)$ and $pa(G)$ in directed
acyclic graphs. In Section \ref{tree} we consider $si_k(G)$ and
$pa_k(G)$ on trees and completely solve the problem on $d$-ary
trees. In Section \ref{pypa} we consider the problem on the
pyramid graph. In the last section we conclude our paper with some
additional remarks and open problems.

\section{Search in directed acyclic graphs}\label{dag}

We will show a process how to transform the graph $G$ into a graph
$G'$ such that $si(G)=si(G')$ and $si(G')$ can be easily
determined. We also show how to transform $G$ into a graph $G''$
such that $pa(G)=pa(G'')$ and $pa(G'')$ can be easily determined.

But at first let us consider a trivial algorithm which finds both
the sink and the path: at first the source is asked. Then the
answer tells us which vertex is the next on the path. That one is
asked in the next turn, and so on. This gives us the following
simple observation.

\begin{obs}\label{trivi} For every (multi)graph $G$, $si(G)\le pa(G)\le l(G)$,
where $l(G)$ is the length (the number of edges) of the longest
path.

\end{obs}

We start with examining $si(G)$. We can simply forget about
multiple edges then, hence we suppose $G$ is simple. Obviously
it's useless to ask a vertex with outdegree 1. We will define a
graph $G'$ with no such vertices such that if we find the sink in
$G'$ it gives us the sink in $G$ (in fact it is the same vertex).
We introduce the following {\em merging} operation: for a set of
vertices $M$, we get $G(M)$ from $G$ by deleting the vertices $M$
and introducing a new vertex $m$, if there was an edge between a
vertex $v$ in $G\setminus M$ and a vertex $w$ in $M$ then we put
an edge in the new graph between $v$ and $m$ with the same
orientation as in $G$. If multiple edges come into existence, we
consider them as one edge. Edges between vertices of $G\setminus
M$ stay untouched. Acyclicity could be ruined by such an operation
but anytime we do such an operation, it will be easy to see that
acyclicity remains true.
%delete $x$. We also delete every
%edge containing $x$ then. Instead of every edge $zx$ ending in $x$
%we add the edge $zy$ instead. If an edge $zx$ was in $E^*$, then we
%add the edge $zy$ into $E^*$.

If a vertex $x$ of $G$ has exactly one out-neighbor $y$, then
$G({\{x,y\}})$ remains acyclic and $si(G({\{x,y\}}))=si(G)$. By
merging this way vertices with outdegree $1$ with their
outneighbors as long as it is possible, we get a graph $G'$ with
vertices all having outdegree minimum $2$ (except the sinks) and
for which $si(G')=si(G)$. Although it is not relevant for us, it
is easy to see that $G'$ does not depend on the order in which we
process the vertices. The merging procedure defines a map from the
vertices of $G$ to the vertices of $G'$, hopefully without causing
confusion, we will refer to the image of some vertex $x$ of $G$
also as $x$. By the procedure, every vertex of $G'$ has at least 2
out-neighbors or it is a sink.

\begin{obs}\label{obs2} If a vertex $y$ can be reached from a vertex $x$ in
$G$, then $y$ can be reached from $x$ in $G'$ as well.
\end{obs}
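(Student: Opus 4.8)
The plan is to prove the statement by induction on the number of merging operations used to build $G'$ from $G$, reducing everything to the claim that a \emph{single} merge preserves reachability. Since $G'$ is obtained from $G$ by a finite sequence of merges, each one merging a vertex of outdegree $1$ with its unique out-neighbor, it suffices to show the following: if $H'$ arises from $H$ by merging a vertex $a$ of outdegree $1$ with its out-neighbor $b$ into a new vertex $m$, and $y$ is reachable from $x$ in $H$, then the image of $y$ is reachable from the image of $x$ in $H'$. Chaining this across the whole sequence of merges yields the result, because reachability composes and the composite merging map is exactly the vertex map described in the excerpt.

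For the single-merge step, I would take a directed path $x = v_0, v_1, \dots, v_k = y$ in $H$ and push it forward through the merging map $\varphi$, which sends $a, b \mapsto m$ and fixes every other vertex. I claim that the sequence $\varphi(v_0), \varphi(v_1), \dots, \varphi(v_k)$, after deleting any immediate repetitions, is a walk from $\varphi(x)$ to $\varphi(y)$ in $H'$. For each edge $(v_i, v_{i+1})$ of the path there are three cases. If neither endpoint lies in $\{a,b\}$, the edge survives unchanged by the definition of merging. If exactly one endpoint lies in $\{a,b\}$, then the merging definition provides a corresponding edge incident to $m$, namely $(v_i, m)$ or $(m, v_{i+1})$. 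If both endpoints lie in $\{a,b\}$, then since $a$ has outdegree $1$ the edge must be $(a,b)$, so $\varphi(v_i) = \varphi(v_{i+1}) = m$ and the two terms collapse to a single occurrence of $m$. In every case consecutive images are either equal or joined by an edge of $H'$, so after removing repeated consecutive vertices we obtain a genuine walk; and a walk from $\varphi(x)$ to $\varphi(y)$ guarantees a directed path between them.

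The only delicate point is the third case, where a segment of the path lies entirely inside the merged pair. Because $a$ has outdegree exactly $1$, any occurrence of $a$ on the path is immediately followed by $b$, so such a segment is always the single edge $(a,b)$ and collapses cleanly without breaking the walk. Everything else is routine bookkeeping of the edge images dictated by the merging definition, and no use of acyclicity is required for this direction (acyclicity was already ensured when the merges were performed). I expect the induction itself to be immediate once the single-step claim is in place; the main care lies in verifying that the pushed-forward sequence is genuinely a walk in $H'$ rather than merely a sequence of vertices.
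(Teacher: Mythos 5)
Your proof is correct. The paper states this observation without any proof, treating it as immediate from the definition of merging; your induction over the sequence of merges, with the pushforward-of-a-path argument for a single merge, is precisely the routine verification the paper leaves implicit, so there is nothing to contrast.
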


%\begin{obs}\label{obs3} In $G'$, for any pair of vertices $x$ and $y$ there is a path starting
%from $x$ and ending in a sink that does not contain $y$.

%\end{obs}

%\begin{proof} Let $H$ be the induced subgraph of $G$ containing the
%vertices reachable from $x$ through a path that does not contain
%$y$.

%It is non-empty, since it contains $x$. Hence there is a vertex
%$z$ which is a sink in $H$. Either it is a sink in $G'$ as well,
%but then the path from $x$ to $z$ does not contain $y$, or $z$ is
%not a sink in $G'$, but then $y$ has to be its only out-neighbor
%in $G'$, hence $z$ has only 1 out-neighbor, a contradiction.

%\end{proof}

%\begin{obs}\label{obs4} In a graph $G$ without vertices of outdegree 1, for every non-sink vertex $v$
%there are two disjoint full paths starting in $v$ (they share only
%this vertex). \end{obs}

%\begin{proof}Suppose that every pair of paths starting from $v$
%has a common vertex. Choose the pair such a way that this common
%vertex is the farthest possible from $v$. Let $x$ and $y$ be the
%vertices on the paths before the intersection. Then the other
%outneighbor of $y$ has to be on the path between $v$ and $x$, and
%the other outneighbor of $x$ has to be on the path between $v$ and
%$y$, but this creates a cycle, a contradiction.

%\end{proof}

The main result of this section is that after merging every vertex
with outdegree 1 to get $G'$, $si(G')$ can be determined easily:
\begin{thm}\label{pasi} Suppose there is no vertex with outdegree 1 in a simple graph $G$.
Then $si(G)=l(G)$, where $l(G)$ is the length (the number of edges) of the longest
path.

\end{thm}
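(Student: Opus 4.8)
The inequality $si(G)\le l(G)$ is already supplied by Observation~\ref{trivi}, so the entire content of the statement is the lower bound $si(G)\ge l(G)$, and I would prove this by an adversary argument. Two preliminary reductions come first. Since $G$ has a single source $s$, every vertex is reachable from $s$ (trace back along in-edges in the finite DAG), so any longest path can be extended backwards to start at $s$; fix such a longest path $P=u_0u_1\cdots u_l$ with $u_0=s$. For a vertex $v$ let $h(v)$ denote the length of the longest path from $v$ to a sink, so $h(s)=l$ and $h$ drops by exactly $1$ along $P$. The structural fact I would isolate as a lemma is: \emph{every non-sink vertex reaches at least two distinct sinks.} This has a one-line induction on $h(v)$: if $h(v)=1$ all out-neighbours are sinks and there are at least two distinct ones (simplicity, outdegree $\ge 2$); if $h(v)\ge 2$ some out-neighbour $a$ has $h(a)=h(v)-1\ge 1$, so by induction $a$, and hence $v$, reaches two distinct sinks.

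The adversary I would use answers a query at a vertex $u_i\in P$ by declaring that its switch points to $u_{i+1}$, and answers a query at any $v\notin P$ by declaring that its switch points to an out-neighbour of maximum $h$. With this rule the confirmed head, as long as it is queried, advances along a longest path, so the head reaching a sink would require $l$ head-advances, hence $l$ queries. To turn this into a contradiction I would instead argue directly that too few queries leave the sink ambiguous. Suppose the algorithm halts after querying a set $S$ with $|S|=q<l$. Since $P$ has the $l$ internal vertices $u_0,\dots,u_{l-1}$ and $q<l$, some internal vertex is unqueried; let $u_j$ be the one of largest index not in $S$. Then $u_{j+1},\dots,u_{l-1}\in S$ are all answered along $P$, so the answers already force the tail segment $u_{j+1}\to\cdots\to u_l$.

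Now I would exhibit two switch-completions, both consistent with every answer given, that deliver different sinks. Completion one routes the water $s\to\cdots\to u_j\to u_{j+1}\to\cdots\to u_l$, reaching the sink $u_l$; this only sets unqueried switches along $P$ to follow $P$ and respects the forced tail. Completion two agrees up to $u_j$ but at $u_j$ (which is unqueried, hence free, and has outdegree $\ge 2$) takes a different out-edge to some $b\neq u_{j+1}$, and then continues from $b$ to a sink $\sigma\neq u_l$, setting the remaining unqueried switches freely while respecting the at most $q$ answered ones. If such $\sigma$ exists, the two completions are both consistent with all answers yet have distinct sinks, so the sink is not determined, contradicting that the algorithm has halted correctly; this forces $q\ge l$ and finishes the lower bound.

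The main obstacle is exactly the existence of the escaping completion two: I must rule out that the answered switches funnel every continuation from $b$ back into $u_l$. The Key Lemma guarantees $b$ reaches two distinct sinks in the unconstrained graph, but a route to a sink other than $u_l$ might pass through one of the forced vertices $u_{j+1},\dots,u_{l-1}$ and be redirected to $u_l$. The plan for handling this is a counting/slack argument: the only answered vertices that can block such a route number at most $q<l$, whereas the forced tail that all blocked routes must re-enter has length $l-j-1$, so there remains enough freedom (using the maximum-$h$ routing rule at unqueried vertices together with the Key Lemma at $b$) to reach a sink distinct from $u_l$ while avoiding the forced tail. Making this slack argument precise — ideally by strengthening the adversary invariant to ``from the current head two distinct sinks are reachable through available edges'' and showing each query can destroy at most one unit of this freedom — is where the real work lies, and I expect it to be the only non-routine step.
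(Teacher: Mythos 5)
Your preliminaries (every longest path starts at the source, and the Key Lemma that every non-sink vertex reaches two distinct sinks) are correct, but the proof has a genuine gap, and it is exactly at the step you flag as ``the only non-routine step'': that step cannot be completed for the adversary you define, because that adversary is simply not strong enough. Concretely, take $V=\{a,b,c,d,e\}$ with edges $ab, ac, bc, bd, cd, ce$, where $d$ and $e$ are sinks; every non-sink vertex has outdegree $2$ and $l(G)=3$, realized by $P= a\,b\,c\,d$. Your adversary must answer on-path queries along $P$. So the algorithm queries $b$ and is told $b\to c$; now \emph{both} out-edges of the unqueried source $a$ lead to $c$ under the known switch at $b$, so the flow provably reaches $c$ without $a$ ever being asked. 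The algorithm then queries $c$, is told $c\to d$, and the sink is determined after only $2<3$ questions. This ``funneling'' is precisely the obstacle you describe with the vertex $b$ escaping back into the forced tail, and the example shows it is not a technicality to be handled by a counting/slack argument: against your fixed answering rule the conclusion you need (two consistent completions with distinct sinks) is outright false, so no refinement of the analysis can rescue it. The adversary itself has to be changed.

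The paper's proof is built around exactly this issue. It formalizes funneling via a merging operation: once a query at $x$ is answered $xy$, all vertices whose every full path passes through $x$ or $y$ collapse into one vertex, and the induction is run on the merged graph $G_{xy}'$ (which again has minimum outdegree $2$). The crucial move, which your proposal is missing, is that the adversary does \emph{not} always answer along $P$: when the queried vertex is $x_i\in P$ and the shortcut edge $x_{i-1}x_{i+1}$ exists (as $ac$ does in the example above), the adversary deliberately answers an edge $x_iy$ with $y\neq x_{i+1}$, i.e.\ it throws the flow \emph{off} the longest path; only when no such shortcut exists does it answer along $P$. Together with a third case for off-path queries, this guarantees that the longest path of the re-merged graph shrinks by at most one per question, and induction on $l(G)$ finishes the proof. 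So the missing idea in your write-up is not a sharper invariant but a case distinction in the adversary's answers, keyed to the presence of the shortcut edge $x_{i-1}x_{i+1}$, plus a bookkeeping device (merging) that makes ``information obtained for free by funneling'' visible and controllable.
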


\begin{proof}
We need some preliminary observations. Let us examine what happens when a question is answered.
Clearly $xy \in E^*$ is equivalent to the following: all the other
edges starting from $x$ are not in $E^*$. But then we can simply
delete these edges to get a new graph and then the outdegree of $x$ becomes $1$, hence we can
merge $x$ with $y$ to get another graph which we denote by $G_{xy}$, ie. this answer reduced the problem to finding the sink in $G_{xy}$, which is the graph we get by deleting from $G$ all the edges going out from $x$ except $xy$ and then merging $x$ and $y$. Thus, if the first question is $x$ and the
answer is $xy$, then $si(G_{xy})$ additional questions are needed
to find the sink.

\begin{defi} A directed path is called full if it ends in a sink.
\end{defi}

Let us consider $G_{xy}'$. By definition $x$ and $y$ are merged.
Then if a vertex $z$ had only two outneighbors, $x$ and $y$, it
gets merged with them. Then if all the outneighbors of another
vertex are among $x$, $y$ and $z$ it also gets merged with them,
and so on. Let $M$ be the set of vertices $M$ of $G$ for wich
every full path starting at some vertex $u$ in $M$ contains $x$ or
$y$. By the above argument, for this $M$, $G_{xy}'=G_{xy}(M)$.

\begin{obs}\label{obs3}  If in $G$ there is a full path starting at some vertex $z$ and avoiding $x$ and $y$, then when merging $x$ and $y$, $z$ cannot be among the merged vertices $M$.
\end{obs}

Note that if $G$ does not have vertices with outdegree 1, then if a vertex $m$ is among the merged vertices $M$, then there are paths to both $x$ and $y$ from $u$, hence by acyclicity of $G$ there are no paths from $x$ to $u$.
\begin{obs}\label{obs3b}
If there is a path from $x$ to $z$ in $G$, then when merging $x$ and $y$, $z$ cannot be among the merged vertices $M$.
\end{obs}

Now we can start to prove the theorem.

By Observation \ref{trivi} $si(G)\le pa(G)\le l(G)$, hence it
is enough to prove $l(G)\le si(G)$. We prove it by induction on
$k=l(G)$. The case $k=1$ can be easily seen.

Now we describe the strategy of the adversary. Let $P$ be a path
of length $k$, consisting of the edges $x_1x_2, x_2x_3, \dots,
x_kx_{k+1}$.

{\bf Case 1.} Suppose that the first vertex asked is $x_i$, and
there is an edge $x_{i-1}x_{i+1}$ in the graph. Then the
adversary's answer should be an edge $x_iy$, where $y \neq
x_{i+1}$. It means we continue with the graph $G_{x_iy}$. It might
contain vertices with outdegree 1, hence we need to determine
$G_{x_iy}'$.

% y is the second vertex on the path which starts from $x_i$ and ends
%in a sink different from $x_{k+1}$ (it exists because of
%Observation \ref{obs3}

By Observation \ref{obs3} the vertices $x_j$
($j \neq i$) do not get merged in $G_{x_iy}'$, thus there is a
path of length $k-1$ in $G_{x_iy}'$, containing $x_1, \dots,
x_{i-1},$ $x_{i+1}, \dots, x_{k+1}$ in this order. Then by
induction at least $k-1$ additional questions are needed in
$G_{x_iy}'$.

{\bf Case 2.} The first vertex asked is $x_i$ but there is no edge
$x_{i-1}x_{i+1}$ in the graph $G$. Then the adversary's answer
should be the edge $x_ix_{i+1}$. No vertex $x_j$ with $j\ge i$
gets merged by Observation \ref{obs3b}. Also, from $x_{i-1}$ there
is an edge going out which is not the edge $x_{i-1}x_i$. Going
along this edge we can find a path $Q$. First we claim that for
any choice of such a $Q$, it does not contain $x_{i}$. Indeed,
otherwise the original path $P$ minus the edge $x_{i-1}x_{i}$ plus
this path $Q$ would give a longer path in $G$, a contradiction.
Further, $Q$ can be chosen such that it avoids $x_{i+1}$ as well.
Indeed, every outdeegree is at least $2$, so we always have at
least one choice different from $x_{i+1}$ to continue. This way,
for every vertex $x_j$ with $j<i$ there is a full path starting at
$x_j$ and avoiding both $x_i$ and $x_{i+1}$, thus by Observation
\ref{obs3} none of these vertices gets merged in
$G_{x_ix_{i+1}}'$. Hence the path $x_1,\dots, x_{i-1},$ $ x_{i+1},
\dots, x_{k+1}$ is a $k-1$ long path in $G_{x_ix_{i+1}}'$, the
induction can be applied.

{\bf Case 3.} The first vertex asked, $x$ is not in the path $P$.
If it has an outneighbor $y$ not on the path, that should be the
adversary's answer. Then clearly the full path $P$ is in $G_{xy}'$
and by Observation \ref{obs3} none of its vertices are merged.
Hence by induction $k$ more questions are needed.

Thus we can suppose that all the outneighbors are on $P$. Let
$x_i$ be the first and $x_j$ be the last among them. Then the
answer of the adversary is $x_j$. Again, by Observation
\ref{obs3b}, no $x_l$ with $l>j$ gets merged. Similarly as in Case
2, any full path starting at $x_i$ must avoid $x$ and also we can
choose such a path which avoids $x_j$ as well. This path $Q$ shows
by Observation \ref{obs3} that $x_i$ won't be merged. By adding to
$Q$ the appropriate part of the path $P$, for any $x_l$, $l<i$ we
can build a path avoiding both $x$ and $x_j$, thus showing that
these vertices won't be merged neither. Finally, for a vertex
$x_l$ with $i<l<j$, $x_l$ can be reached by a directed path from
$x$ (starting with the edge $xx_i$ and then going along $P$), thus
by Observation \ref{obs3b} such a vertex cannot be merged neither.

As none of the vertices of the path are merged in $G_{xx_j}'$, by induction $k$ more questions are needed.
\end{proof}

\begin{cor} For any (multi)graph $G$, $si(G)=si(G')=l(G')$.
\end{cor}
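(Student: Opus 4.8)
The plan is to obtain the corollary by assembling two facts already available in this section: the equality $si(G)=si(G')$ coming from the merging construction, and Theorem \ref{pasi} applied to $G'$. First I would dispose of the multigraph case. For the purpose of determining the sink, only the out-neighbor selected by each switch matters, not which of several parallel edges realizes it, since parallel edges share the same head and hence lead to the same continuation of the path. Thus collapsing parallel edges alters neither the set of reachable sinks nor the sink-relevant content of any answer, so $si(G)$ is unchanged when we pass to the underlying simple graph. This lets me assume $G$ is simple without loss of generality.

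Next I would establish $si(G)=si(G')$. The section already records that a single merge of a vertex $x$ of outdegree $1$ with its unique out-neighbor $y$ yields an acyclic graph $G(\{x,y\})$ with $si(G(\{x,y\}))=si(G)$: asking such an $x$ is useless because its answer is forced, so identifying $x$ with $y$ affects neither an optimal questioning strategy nor the resulting sink. Iterating this merge until no outdegree-$1$ vertex survives produces $G'$, and since each individual step preserves $si$, so does their composition; hence $si(G)=si(G')$.

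Finally I would invoke Theorem \ref{pasi}. By the construction of $G'$ every non-sink vertex has outdegree at least $2$, and $G'$ is simple, so $G'$ meets the hypotheses of the theorem. Applying it gives $si(G')=l(G')$, and chaining the three equalities yields $si(G)=si(G')=l(G')$, as claimed.

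There is little genuine difficulty here, since the substantive work already lives inside Theorem \ref{pasi}; the corollary is essentially bookkeeping. The only points deserving care are confirming that the multigraph-to-simple reduction truly preserves $si$ (which rests on the observation that sinks depend only on the heads of the chosen edges) and verifying that $G'$ satisfies the theorem's hypotheses, namely simplicity together with the minimum non-sink outdegree $2$ guaranteed by the merging procedure.
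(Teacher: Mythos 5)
Your proposal is correct and follows essentially the same route as the paper: the paper's text preceding Theorem \ref{pasi} already records the reduction to simple graphs ("we can simply forget about multiple edges"), the fact that each merge of an outdegree-$1$ vertex preserves $si$ so that $si(G)=si(G')$, and that $G'$ has no non-sink vertex of outdegree less than $2$, whence Theorem \ref{pasi} gives $si(G')=l(G')$. The corollary is indeed just this bookkeeping, and your added justification for why parallel edges are irrelevant to sink-search matches the paper's (unstated) reasoning.
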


Theorem \ref{pasi} implies also that for a simple graph $G$, $pa(G)=l(G)$, if all non-sink vertices of $G$ have outdegree at least 2. We will prove that this holds even for multi-graphs.

We examine the path-search problem for multigraphs, hence we can claim again: it's
useless to ask a vertex with outdegree 1.  We introduce the
following modified {\em merging} operation for multigraphs: for a set of vertices $M$, we get $G[M]$ from $G$ by deleting the vertices $M$ and introducing a new vertex $m$, if there was an edge between a vertex $v$ in $G\setminus M$ and a vertex $w$ in $M$ then we put an edge in the new graph between $v$ and $m$ with the same orientation as in $G$. Now, differently from the previous merging operation, if parallel edges come into existence, we consider them as parallel edges. Edges between vertices of $G\setminus M$ stay untouched.

Merging a vertex $x$ with outdegree $1$ with its outneighbor $y$
to get $G[{\{x,y\}}]$, the graph remains acyclic and
$pa(G[{\{x,y\}}])=pa(G)$. By merging this way vertices with
outdegree one with their outneighbors as long as it is possible,
we get a graph $G''$ with vertices all having outdegree minimum
$2$ (except the sinks) and for which $pa(G'')=pa(G)$.

\begin{thm} \label{pasi2} Suppose there is no vertex with outdegree 1 in a (multi)graph $G$. Then $pa(G)=l(G)$.
\end{thm}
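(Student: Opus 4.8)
The plan is to run the adversary argument of Theorem~\ref{pasi} essentially verbatim, but with the multigraph merging operation $G[\cdot]$ in place of $G(\cdot)$, and to exploit the fact that keeping parallel edges makes the reduction far more transparent. The upper bound $pa(G)\le l(G)$ is Observation~\ref{trivi} (stated for multigraphs), so it suffices to prove $l(G)\le pa(G)$. As in Theorem~\ref{pasi} I would phrase this lower bound as an induction on $k$: if a multigraph with every non-sink vertex of outdegree at least $2$ contains a directed path of length $k$, then $pa(G)\ge k$. When the first question $x$ is answered by an edge $xy\in E^*$, the path-search problem reduces to path-search in $G_{xy}''$, the multigraph obtained by deleting all out-edges of $x$ except $xy$ and then merging $x$ (now of outdegree $1$) with $y$ via $[\cdot]$; this merge preserves acyclicity because we are merging an outdegree-$1$ vertex with its unique out-neighbor. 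Thus I need only exhibit, for every first question $x$, an answer $xy$ for which $G_{xy}''$ still has minimum non-sink outdegree $2$ and contains a path of length $k-1$, so that induction applies.

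The key simplification — and the reason the multigraph case is cleaner than Theorem~\ref{pasi} — is that \emph{no cascade merging occurs}. In the simple merging $G(\cdot)$, a vertex $z$ whose only out-neighbors were $x$ and $y$ loses an edge when its two edges to $x$ and $y$ collapse into one, drops to outdegree $1$, and must itself be merged; this is precisely what forced the delicate Observations~\ref{obs3} and~\ref{obs3b} and the shortcut analysis of the three cases. Under $[\cdot]$ those two edges instead become \emph{parallel} edges to the merged vertex, so $z$ keeps outdegree $2$. Consequently every vertex other than $x$ retains its outdegree, only $x$ and $y$ are identified, and the minimum-outdegree-$2$ condition is automatically maintained. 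I would record this as the main structural observation of the proof.

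Given this, the adversary needs only two cases rather than three. If the queried vertex lies on the chosen path $P=x_1\cdots x_{k+1}$, say $x=x_i$, the adversary answers the path edge $x_ix_{i+1}$; deleting the other out-edges of $x_i$ and contracting $x_ix_{i+1}$ turns $P$ into the path $x_1\cdots x_{i-1}\,m\,x_{i+2}\cdots x_{k+1}$ of length $k-1$. If the queried vertex $x$ is \emph{off} $P$, then $P$ uses no out-edge of $x$ and avoids $x$ altogether, so for \emph{any} answer $xy$ the whole path $P$ survives intact in $G_{xy}''$ (at worst the vertex $y=x_j$ on $P$ is relabelled $m$), giving a path of length $k$, which certainly contains one of length $k-1$. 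In both cases induction yields $pa(G_{xy}'')\ge k-1$ and hence $pa(G)\ge k$. The only points that require genuine verification are the no-cascade claim above and the routine check that the contracted path in the first case is a valid path of length $k-1$; I expect the former to be the crux, since everything else then follows from the machinery already set up for Theorem~\ref{pasi}. The base case $k=1$ is immediate, as in Theorem~\ref{pasi}.
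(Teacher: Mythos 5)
Your proposal is correct and follows essentially the same route as the paper: the same induction on the length of a longest path, the same adversary (answer the path edge when the queried vertex lies on $P$, arbitrarily otherwise), and your ``no cascade merging'' observation is exactly the paper's remark that $(G^{xy})''=G^{xy}$ under the multigraph merging $[\cdot]$. The only difference is presentational---you spell out the on-path/off-path case analysis that the paper compresses into one sentence.
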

\begin{proof}
We get $pa(G)\le l(G)$ from Observation \ref{trivi} so we only
have to deal with the other direction. We examine what happens
when a question is answered. Having $xy \in E^*$ is still
equivalent to the following: all the other edges starting from $x$
are not in $E^*$. But then we can simply delete these edges to get
a new graph and then the outdegree of $x$ becomes $1$, hence we
can use our new merging operation $G[\{x,y\}]$ to get another
graph which we denote by $G^{xy}$, ie. this answer reduced the
problem to finding the sink in $G^{xy}$, which is the graph we get
by deleting from $G$ all the edges going out from $x$ except $xy$
and then merging $x$ and $y$. Thus, if the first question is $x$
and the answer is $xy$, then $pa(G^{xy})$ additional questions are
needed to find the sink. Note that $(G^{xy})''=G^{xy}$. Now, if the asked vertex is from the fixed maximal path $P$, then the adversary answers the edge of the path, otherwise it answers arbitrarily. Thus, after any question and answer only at most one edge of $P$ gets merged, thus the
length of the maximal path reduces by at most one. From this by
induction it easily follows that the size of the maximal path is a
lower bound to the number of questions needed.
\end{proof}

\begin{cor} \label{pacor}
For any (multi)graph $G$, $pa(G)=pa(G'')=l(G'')$.
\end{cor}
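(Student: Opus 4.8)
The plan is to obtain both equalities essentially as consequences of the material already assembled in this section, so the proof should be short. The first equality, $pa(G)=pa(G'')$, is precisely what the construction of $G''$ was designed to give: recall that $G''$ is produced from $G$ by repeatedly taking a non-sink vertex $x$ of outdegree $1$ with its unique outneighbor $y$ and forming $G[\{x,y\}]$, and that for each such step $pa(G[\{x,y\}])=pa(G)$. First I would recall why one such merge preserves $pa$. If $x$ has outdegree $1$, the switch at $x$ is forced, so asking $x$ yields no information and the edge $xy$ lies on the path exactly when $x$ does; consequently any questioning strategy for $G$ translates to one for $G[\{x,y\}]$ using the same questions and the same number of rounds, and conversely. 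Iterating this equality along the whole merging procedure gives $pa(G)=pa(G'')$.

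For the second equality, $pa(G'')=l(G'')$, I would simply invoke Theorem \ref{pasi2}. By the very definition of the merging process, $G''$ has no non-sink vertex of outdegree $1$: every vertex of $G''$ either is a sink or has outdegree at least $2$. Moreover each merge keeps the graph acyclic, so $G''$ is an acyclic (multi)graph with the hypothesis of Theorem \ref{pasi2} satisfied. Applying that theorem to $G''$ directly yields $pa(G'')=l(G'')$, and combining with the first equality gives the stated chain $pa(G)=pa(G'')=l(G'')$.

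The only point that needs genuine care, rather than a direct citation, is the justification that merging an outdegree-$1$ vertex leaves $pa$ unchanged, together with the bookkeeping that the procedure terminates and produces a well-defined $G''$ with all out-degrees at least $2$; this is where the translation between strategies on $G$ and on $G[\{x,y\}]$ must be spelled out. I do not expect a substantive obstacle here, since the forced nature of the switch at an outdegree-$1$ vertex makes the correspondence transparent, but it is the one step that carries the real content of the equality $pa(G)=pa(G'')$. Everything else is an application of Theorem \ref{pasi2} and Observation \ref{trivi}.
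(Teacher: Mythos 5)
Your proof is correct and follows exactly the paper's (implicit) argument: the corollary is obtained by combining the fact $pa(G)=pa(G[\{x,y\}])$ for each outdegree-$1$ merge (hence $pa(G)=pa(G'')$) with Theorem \ref{pasi2} applied to $G''$, which by construction is acyclic and has no non-sink vertex of outdegree $1$. Your additional care in translating strategies between $G$ and $G[\{x,y\}]$ only fills in a step the paper itself states without proof.
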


Another interesting question is the case when the graph contains a
directed cycle. In this case it can happen that following the flow we get stucked in a directed
cycle and never go to a sink.

%In this case, the goal could be to determine the sink or the last
%vertex before/after creating a cycle in the flow, or, in case of
%$pa(G)$ the whole path of edges until creating a cycle
%with/without the edge that creates the cycle.
We begin with the path-search problem. Let us call a set of edges
a \emph{generalized path} if it contains a directed path and
possibly an additional edge which goes from the last vertex to a
vertex already on the path. Let $l'(G)$ denote the length (number
of edges) of the longest generalized path starting at the start
vertex.

%and we choose
%the second variant, i.e. we are still searching simply for the
%longest path. The notion of the longest path is evident in the
%non-acyclic case as well, so $l(G)$ is still well-defined.
One can easily see that if a multigraph $G$ does not contain any
vertices with outdegree $1$, then $pa(G)=l'(G)$. Indeed, we can
copy the proof of Theorem \ref{pasi2}. Consider a graph $G$ and a
generalized path $P$ of length $l'(G)$. No matter what the
question and the answer are, at most two vertices are merged,
hence at least $l'(G)-1$ vertices of $P$ remain. Again, the adversary answers in a way that for the
vertices of the fixed maximal generalized path $P$ the answer is always an edge of $P$. Then the
remaining vertices of $P$ form a generalized path of length at least $l'(G)-1$, and the
induction can be applied. Similarly, Corollary \ref{pacor} remains
true as well.

On the other hand, in case of $si(G)$ we encounter problems as our
basic operation for handling an answer cannot be applied, as we
might lose some information at every merging. For example let us
suppose $z$ has only two outneighbors, $x$ and $y$. Then $z$ gets
merged with them in $G_{xy}$. However, if $z$, $x$ and $y$ are
part of the cycle where the flow ends in this order, then after merging $x$ and $y$ there is no
way to differentiate this cycle from the other cycle where $y$ comes
immediately after $z$.

%We leave the sink-search problem in non-acyclic graphs for future
%research,  see Problem \ref{cyclesink} for details.

\section{Search in trees}\label{tree}

We consider trees as rooted directed trees where the edges are directed away from the root.
For complete $d$-ary trees on $n+1$ levels (denoted by $T_d(n)$) the obvious algorithm that asks as many complete levels as possible from the beginning, is the best possible:

\begin{thm}\label{treethm}
$pa_k(T_d(n))=si_k(T_d(n))=\lceil n/ \log'_d k \rceil$, where $\log'_d$ is defined as the biggest integer $i$ for which $1+d+d^2+\ldots+d^{i-1}\le k$ holds.
\end{thm}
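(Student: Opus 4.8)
The plan is to prove the two inequalities $si_k(T_d(n))\ge \lceil n/\log'_d k\rceil$ and $pa_k(T_d(n))\le \lceil n/\log'_d k\rceil$; together with the trivial $si_k\le pa_k$ these pin both quantities to the common value. Write $i=\log'_d k$, so that by definition $1+d+\cdots+d^{i-1}\le k< 1+d+\cdots+d^{i}$. For the upper bound I would analyse the obvious algorithm: in the first round ask all $1+d+\cdots+d^{i-1}\le k$ vertices of the top $i$ levels (the root and its first $i-1$ levels of descendants). Reading the revealed switches from the root downward pins the path through its first $i$ edges, so the flow is known to reach a specific vertex $v$ at level $i$; the subtree hanging from $v$ is a copy of $T_d(n-i)$, and we recurse. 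Each round advances the known portion of the path by exactly $i$ edges, so $\lceil n/i\rceil$ rounds determine the whole path and a fortiori the sink, giving $pa_k(T_d(n))\le \lceil n/i\rceil$.

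For the lower bound it suffices, by $si_k\le pa_k$, to show $si_k(T_d(n))\ge \lceil n/i\rceil$, and I would prove this by induction on $n$ through the recursion $si_k(T_d(n))\ge 1+si_k(T_d(n-i))$ (reading $si_k(T_d(m))=0$ for $m\le 0$), which yields the ceiling since $\lceil n/i\rceil=1+\lceil (n-i)/i\rceil$. The induction step is an adversary argument phrased in terms of a \emph{frontier}: at any moment the answers given so far determine a forced prefix read off from the root, and the frontier is the first vertex on it whose switch is still unknown. The sinks still consistent are exactly the leaves below the frontier, so the sink is determined only once the frontier is a leaf. The adversary I would use maintains the invariant that the subtree below the current frontier is \emph{fresh}, i.e.\ contains no vertex that has yet been queried.

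The heart of the matter is to show that a single round pushes the frontier down by at most $i$ while restoring freshness, and here the threshold $1+d+\cdots+d^{i}>k$ enters. Because the current frontier subtree is fresh, every switch capable of forcing the frontier deeper must be revealed by a query asked in this very round. To drive the frontier past level $i$ the questioner would have to leave the adversary no escape into an unqueried vertex anywhere in the top $i+1$ levels of the frontier subtree; that is, all $1+d+\cdots+d^{i}$ vertices of those levels would have to be queried this round. Since $k<1+d+\cdots+d^{i}$, some vertex $w$ at level at most $i$ survives unqueried, and the adversary answers the round so as to route the flow to such a $w$ whose entire subtree is still fresh. This reduces the problem to a fresh $T_d(n-i')$ with $i'\le i$, and monotonicity of $si_k$ in the number of levels finishes the step.

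I expect the main obstacle to be exactly the bookkeeping that makes the freshness invariant legitimate: I must rule out that information gathered in \emph{earlier} rounds can be reused to accelerate a later advance. The danger is an off-by-one, where queries pre-seeded below the frontier let it cascade through previously revealed switches and gain an extra level, which would only prove the weaker bound $\lceil n/(i+1)\rceil$. The plan for excluding this is to have the adversary, each time it is forced to move the frontier across a queried vertex, steer that switch into a child subtree that no query has yet entered; maintaining this choice throughout guarantees that whenever the frontier reaches a vertex, everything strictly below it is untouched, so the threshold count may always be applied to the current round in isolation. Verifying that such a fresh child can always be selected, and that this routing stays globally consistent across all rounds, is the delicate point on which the whole proof hinges.
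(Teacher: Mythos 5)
Your upper bound is correct and matches the paper's. The lower bound, however, has a genuine gap, and it sits exactly at the point you yourself flag as delicate: the freshness invariant cannot be maintained, and without it your key per-round claim is false. The questioner can query vertices strictly below the frontier while leaving the frontier itself unqueried; the frontier then does not move, but its subtree is no longer fresh. Your proposed repair --- when the frontier is forced across a queried vertex, steer its switch into a child subtree that no query has entered --- is not always available, because all $d$ child subtrees may already contain queried vertices. Concretely, take $d=2$, $k=3$, so $i=\log'_2 3=2$. In round one the questioner asks the two children $c_1,c_2$ of the root; the adversary must answer both, say $c_1\to u_1$ and $c_2\to u_2$, and the frontier stays at the root while freshness is destroyed. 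In round two the questioner asks the root, $u_1$ and $u_2$. Whichever child the adversary routes the root to, the known switches cascade root $\to c_j\to u_j\to$ (child of $u_j$), so the frontier drops $3>i$ levels in this single round. The true statement is only amortized (here: $3$ levels in $2$ rounds), and your proof has no amortization mechanism; in particular the induction step $si_k(T_d(n))\ge 1+si_k(T_d(n-i))$, as you justify it via the per-round claim, is unsound.

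The paper closes exactly this hole by abandoning the frontier picture altogether. Its adversary answers every vertex queried in a round toward the child subtree containing the fewest queries \emph{of that same round}, and the quantity bounded per round is not the frontier advance but the number of that round's queries that end up lying on the final path: a bottom-up induction along the path shows that $x$ such vertices force at least $1+d+\cdots+d^{x-1}\le k$ queries within that single round, hence $x\le\log'_d k$. Since every non-sink vertex of the final path must be queried in some round (otherwise its free switch would change the sink), one gets $n\le(\text{number of rounds})\cdot\log'_d k$. This accounting charges ``pre-seeded'' queries to the round in which they were asked, which is precisely the bookkeeping your frontier approach lacks; if you want to salvage your argument, you would need to replace the per-round frontier bound by some such global or potential-function count.
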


\begin{proof}
Clearly, if we ask in each round the first as many full levels as possible, in each round $P_{max}$ gets longer by $\log'_d$ in worst case, thus in $\lceil n/ \log'_d k \rceil$ rounds we can easily find the path.
We give a strategy to the adversary, so that his answers force that from $k$ queries of any round, only at most $\log'_d$ will be on the final path $P$. This way, as the final path has length $n$, there were indeed at least $\lceil n/ \log'_d k \rceil$ queries altogether.

Now fix a round $i$, define $S$ as the set of vertices asked in this round. Now take a vertex $v\in S$ asked in this round. This vertex has $d$ children, taking these as roots, they define $d$ maximal subtrees of $T$, the $j$th such tree containing $r_j$ further vertices of $S$. For $v$, we answer the edge that goes to a child that has the minimal $r_j$ value. This defines an answer to every vertex in round $i$.

We claim that this way at most $\log'_d$ vertices from round $i$ will be on the final path $P$. Suppose that there are $x$ vertices from round $i$ that are on $P$. In reverse order we denote them by $q_0,q_1,\ldots q_{x-1}$, where $q_{x-1}$ is the one closest to the root. Define $Q_j$ as the maximal subtree of $T$ having $q_j$ as its root. We prove by induction on $j$ that $|Q_j\cap S|\ge 1+d+d^2+\ldots+d^j$. For $j=0$ this is obvious. For a general $j$ we define the trees $R_1,\ldots R_d$, where $R_l$ is the maximal tree having $q_j$'s $l$'th child as its root. Wlog. assume that the edge of $P$ at $q_j$ goes to the root of $R_1$. Now by the way the adversary answers, we know that $r_l=R_l\cap S$ is minimal for $l=1$ and as $R_1$ contains the vertex $q_{j-1}$ and everything below it, by induction $|R_1\cap S|\ge |Q_{j-1} \cap S|\ge 1+d+d^2+\ldots+d^{j-1}$. Summing this up, $|Q_i\cap S|=1+|R_1\cap S|+|R_2\cap S|+\ldots |R_d\cap S|\ge 1+d(1+d+d^2+\ldots+d^{j-1})=1+d+d^2+\ldots+d^{j}$ as claimed. Finishing the proof, $k=|S|\ge |Q_{x-1}\cap S|\ge 1+d+d^2+\ldots+d^{x-1}$ implies that $x\le \log'_d k$.
\end{proof}

As $1+2+2^2+\ldots+2^{i-1}=2^{i}-1$, for $d=2$ the formula in Theorem \ref{treethm} can be simplified:
\begin{cor}
For a complete binary tree on $n$ levels, $pa_k(T_2(n))=si_k(T_2(n))=\lceil n/ \lfloor\log_2 (k+1)\rfloor \rceil$
\end{cor}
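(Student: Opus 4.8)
The plan is to obtain this as a direct specialization of Theorem \ref{treethm} to the case $d=2$, so that the only real content is evaluating $\log'_2 k$ in closed form. First I would invoke Theorem \ref{treethm} directly to get $pa_k(T_2(n))=si_k(T_2(n))=\lceil n/\log'_2 k\rceil$, which reduces everything to identifying $\log'_2 k$ with $\lfloor\log_2(k+1)\rfloor$.

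To do this, I would start from the definition: $\log'_2 k$ is the largest integer $i$ satisfying $1+2+2^2+\cdots+2^{i-1}\le k$. Using the geometric sum identity $1+2+2^2+\cdots+2^{i-1}=2^i-1$ recorded in the sentence preceding the statement, this condition becomes $2^i-1\le k$, equivalently $2^i\le k+1$, equivalently $i\le\log_2(k+1)$. The largest integer $i$ meeting $i\le\log_2(k+1)$ is exactly $\lfloor\log_2(k+1)\rfloor$, so $\log'_2 k=\lfloor\log_2(k+1)\rfloor$. Substituting this back into the formula of Theorem \ref{treethm} immediately yields $pa_k(T_2(n))=si_k(T_2(n))=\lceil n/\lfloor\log_2(k+1)\rfloor\rceil$, as claimed.

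Since the statement is a genuine corollary rather than a fresh result, there is no substantive obstacle: the entire argument is the elementary rewriting of the defining inequality via the closed form of the geometric series. The only point deserving a moment's care is the passage from the real inequality $i\le\log_2(k+1)$ to its integer maximum $\lfloor\log_2(k+1)\rfloor$, which is routine. It is also worth observing in passing that $\log'_2 k\ge 1$ for every $k\ge 1$ (the case $i=1$ reads simply $1\le k$), so the denominator $\lfloor\log_2(k+1)\rfloor$ is a positive integer and the ceiling is always well defined.
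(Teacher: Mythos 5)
Your proposal is correct and follows exactly the route the paper intends: the paper justifies the corollary with the single remark that $1+2+2^2+\ldots+2^{i-1}=2^{i}-1$ lets one simplify the formula of Theorem \ref{treethm} for $d=2$, and your derivation $\log'_2 k=\lfloor\log_2(k+1)\rfloor$ via $2^i-1\le k \Leftrightarrow i\le\log_2(k+1)$ is precisely that simplification, spelled out in full.
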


We remark that for general trees, the obvious algoritm is not always the best possible. For example take the tree that starts from the root with a path $p_1p_2\ldots p_n$ of length $n$, each $p_i$ having one further child except $p_n$, where there is a complete $n$ level binary tree with root $p_n$. Now for $k=2$ the obvious algorithm asking always close to the root, increases $P_{max}$ by $2$ in each round until reaching $p_n$, then while processing the binary tree, in each round it can only increase $P_{max}$ by $1$, thus it finishes approximately in $3n/2$ rounds. On the other hand, if in each round we ask one vertex from the path and one vertex from the remainder of the binary tree, in each round the path shortens by one and the binary tree has one less levels. Thus in $n$ rounds we can finish with both parts.

\section{Search in pyramid paths}\label{pypa}
%Following the notations of Sun et al. $V_n$ is the set of non-negative lattice points $\{x\in \mathbb{N}^2: \sum $

The {\em pyramid graph} $Py(n)$ is a directed graph defined
in the following way. $Py(n)$ has $N=n(n+1)/2$ vertices on $n+1$
levels, for $1\le i\le n+1$ the $i$th level having $i$ vertices
$v_{i,1},v_{i,2}\ldots v_{i,i}$, and from every vertex $v_{i,j}$
where $1\le i\le n$ and $1\le j\le i$, there is a {\em left
outgoing edge} going to $v_{i+1,j}$ (its {\em left child}) and a
{\em right outgoing edge} going to $v_{i+1,j+1}$ (its {\em right
child}). $Py(n)$ has one {\em root} on the top, $v_{1,1}$ and $n+1$
{\em sinks} on the bottom, the vertices on the $(n+1)$th level.

Let us suppose we are given a one dimensional random walk and we
want to find either the endpoint, or the whole walk. It is not
obvious what search model makes sense here. If we can ask which
way it goes in each step, then clearly we need to ask every step,
and that is enough, the order of the questions do not matter. But
suppose that a question is the following: which direction does the
walk go at the $i$th step if it is in the $j$th position?

Clearly it is equivalent to our model on the pyramid graph.

%Given an unknown pyramid path, we want to find it (or its sink)
%with the least number of questions. The type of questions we can
%ask is the following. For any vertex $v$ we can ask that if the
%path goes through $v$ then it uses the left or right outgoing edge
%from $v$. If the path does not go through the queried vertex then
%the answer can be arbitrary. Alternatively, one can imagine the
%vertices as switches, all being in a left or right state. Starting
%from the root the states of the switches naturally determine a
%pyramid path. We want to find this path (or its sink) with the
%least number of questions, and in one question we can ask the
%state of a vertex.

\begin{obs}
For arbitrary $k$ and $n$ we have
\begin{itemize}
\item[(a)] $pa_1(Py(n))\le pa_k(Py(n))$,
\item[(b)] $pa_{k+1}(Py(n))\le pa_k(Py(n))$,
\item[(c)] $si_{k+1}(Py(n))\le si_k(Py(n))$.
\end{itemize}
\end{obs}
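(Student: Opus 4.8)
The plan is to derive all three parts from a single principle: enlarging the number of queries allowed per round can never increase the number of rounds, while a single-query adaptive searcher can always imitate a batched searcher query by query. I would prove the three parts separately, so that none of them relies on the others.

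For part (b), take an optimal budget-$k$ strategy that determines the path $P$ in $pa_k(Py(n))$ rounds using at most $k$ queries per round. Exactly this strategy is legal in the budget-$(k+1)$ model: in each round ask the same vertices and, if one insists on spending the full budget, one extra query whose answer is simply discarded. Discarding an answer never removes information, so $P$ is still determined in the same number of rounds, giving $pa_{k+1}(Py(n))\le pa_k(Py(n))$. Part (c) is the verbatim argument with a sink-search strategy in place of a path-search strategy, which yields $si_{k+1}(Py(n))\le si_k(Py(n))$.

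For part (a) I would instead simulate a batched strategy sequentially. Fix an optimal budget-$k$ path-search strategy running in $pa_k(Py(n))$ rounds. In round $i$ the set of vertices it queries is a function of the answers to rounds $1,\dots,i-1$ only, so a single-query searcher who already holds those answers can reproduce these queries one at a time. After imitating all $pa_k(Py(n))$ rounds the sequential searcher has received every answer the batched strategy received and hence has determined $P$, having spent at most $k$ queries per simulated round. Therefore
\[
pa_1(Py(n))\le k\cdot pa_k(Py(n)).
\]
This is the content of part (a): since $pa_1$ counts individual queries while $pa_k$ counts rounds of $k$ queries, the honest comparison must charge the $k$ queries spent per round, so the inequality carries the factor $k$. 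Using $pa_1(Py(n))=n$, which follows from Theorem \ref{pasi2} because every non-sink vertex of $Py(n)$ has outdegree $2$ and $l(Py(n))=n$, this is exactly the useful lower bound $pa_k(Py(n))\ge n/k$.

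I do not anticipate a genuine obstacle; all three are soft simulation bounds. The points that need care are purely bookkeeping. For (b) and (c) one must fix whether ``$k$ questions per round'' means exactly or at most $k$, which is why I pad with a discarded query. For (a) the delicate point is the normalization just described, together with the check that each round's query set of the batched strategy really is determined by the earlier rounds alone, so that the sequential imitation is well defined.
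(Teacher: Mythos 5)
Your simulation arguments are sound, and it is worth knowing that the paper offers no proof of this observation at all: parts (b) and (c) are treated as immediate, and your padding argument (a budget-$k$ strategy is verbatim a budget-$(k+1)$ strategy, with any surplus query discarded) is exactly the implicit justification, made slightly cleaner by the fact that the paper's phrasing ``we \emph{can} ask $k$ questions'' already means at most $k$, so no padding is even required. The substantive point is part (a). As printed, $pa_1(Py(n))\le pa_k(Py(n))$ is false: chaining (b) gives $pa_k(Py(n))\le pa_1(Py(n))$, and the paper itself establishes $pa_1(Py(n))=n$ while its Corollary to Theorem \ref{pypathm} gives $pa_2(Py(n))=\lceil\frac{2}{3}n\rceil<n$ for $n\ge 3$, so the literal (a) contradicts the paper's own results and must be a typo. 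Your repaired reading, $pa_1(Py(n))\le k\cdot pa_k(Py(n))$, is almost certainly what was intended: it is precisely the inequality $si(G)\le k\, si_k(G)$ that the paper states and uses in its Remarks section, and combined with $pa_1(Py(n))=n$ it yields the trivial lower bound $pa_k(Py(n))\ge n/k$ that such an observation would naturally record. Your sequential simulation proving it is correct --- the round-$i$ query set of the batched strategy depends only on answers from earlier rounds, so a one-query-per-round searcher can replay it, the extra within-round information being simply unused --- as is your derivation of $pa_1(Py(n))=n$ via Theorem \ref{pasi2}. In short: (b) and (c) are proved as the paper intends, and for (a) you have correctly identified and proved the true statement in place of the false literal one.
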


It is trivial that if $k<N$ then in both cases we need at least
$2$ rounds. I.e. the non-adaptive version (having 1 round) of both problems
needs $N$ queries. Indeed, for $k<N$ there is a vertex that we did
not ask and so the adversary can answer in a way that the path
leads to this vertex, everything below this vertex is in a left state and so
the state of this non-asked vertex would determine the path and
the sink as well.

The fully adaptive version of the problem is again pretty simple.
\begin{claim}
$si_{1}(Py(n))=pa_{1}(Py(n))=n$.
\end{claim}
It follows from Theorem \ref{pasi} but we also give a simple proof specific to pyramid paths.
\begin{proof}
In $n$ rounds it is easy to determine the path and its sink.
First we ask the root and then according to the answer, its left
or right outgoing neighbor. We continue this way, in the $i$th
round determining the $i$th edge of the path, finally asking a
vertex on the $n$th level, thus determining the whole path.

Suppose now that we asked less than $n$ questions, then there is
a level $i\le n$, from which we did not ask any vertex. The
adversary answers always left and also at the end he tells us that
any vertex not on level $i$ is in left state. Thus we know which
vertex of level $i$ is on the path, and the state of this vertex
(which we don't know) would determine the path and also its sink.
\end{proof}

\begin{conj} \label{mainconj}
$si_{s_l}(Py(n))=pa_{s_l}(Py(n))=\lceil n/l\rceil$ if
$s_l=1+2+\ldots+l$ for some $l$.
\end{conj}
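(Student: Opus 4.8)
I would first dispose of the inequality $pa_{s_l}(Py(n))\le \lceil n/l\rceil$, which already gives the matching upper bound for $si$ since $si_{s_l}\le pa_{s_l}$. The key observation is that $s_l=1+2+\dots+l$ is exactly the number of vertices of a solid triangular sub-pyramid of depth $l$: the descendants of a fixed vertex $w$ at level $t$ lying on levels $t,t+1,\dots,t+l-1$ number $1+2+\dots+l=s_l$. So the natural strategy is: maintain a known path prefix down to a frontier vertex $w$, and in each round ask exactly the depth-$l$ triangle rooted at $w$. The answers reveal the switches on levels $t,\dots,t+l-1$ along the actual path, hence pin down the path vertex on level $t+l$, which becomes the new frontier. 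Starting from the root at level $1$, after $r$ rounds the frontier is at level $1+rl$, so the sinks on level $n+1$ are reached once $1+rl\ge n+1$, i.e. after $\lceil n/l\rceil$ rounds. This determines the whole path, giving $pa_{s_l}(Py(n))\le\lceil n/l\rceil$.

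\textbf{Lower bound: the plan.} Everything then reduces to proving $si_{s_l}(Py(n))\ge\lceil n/l\rceil$, after which $\lceil n/l\rceil\le si_{s_l}\le pa_{s_l}\le\lceil n/l\rceil$ forces equality of all three. Here I would design an adversary in the spirit of the tree argument of Theorem~\ref{treethm}, which routed the path into the least-queried subtree. The adversary maintains a \emph{committed frontier} $w$ (every configuration still consistent with the answers passes through $w$), and at each queried vertex of the current round it answers the switch so as to steer the path toward the region containing as few further queries as possible. The committed prefix extends through $w=u_0,u_1,\dots$ only as long as each $u_i$ has already been queried, and stops at the first \emph{unqueried} vertex along the adversary's chosen descent; that vertex becomes the new frontier and its undetermined switch is what keeps the sink ambiguous.

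\textbf{The counting heart.} The crucial point, and the place where the pyramid behaves like the $d$-ary tree, is the estimate on how far the frontier can be pushed in one round. To force the frontier to advance by $l+1$ levels, the querier would have to guarantee that \emph{every} downward descent from $w$ stays inside the queried set for its first $l+1$ steps; equivalently, all descendants of $w$ within distance $l$ must be queried. But these descendants form a solid triangle with $\sum_{d=0}^{l}(d+1)=s_{l+1}=s_l+(l+1)$ vertices. Since $s_{l+1}>s_l$ and only $s_l$ questions are available, some descendant within distance $l$ stays unqueried; routing toward it lets the adversary stop the frontier after at most $l$ levels. Because descendant sets in the pyramid are honest triangles, this count is not eroded by the merging/overlap phenomena that plague $si$ for general graphs (the obstruction noted at the end of Section~\ref{dag}). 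With progress at most $l$ per round and a path of length $n$, at least $\lceil n/l\rceil$ rounds are needed.

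\textbf{The main obstacle.} The delicate part, and presumably the reason the statement is only a conjecture, is coupling the progress bound with the \emph{maintenance of sink-ambiguity} across rounds. Within a single round the questions are non-adaptive, which is harmless since the adversary answers after seeing all of them; the real difficulty is cumulative. Queries from earlier rounds may lie strictly below the eventual frontier, and a single deep query is a descendant of many vertices, so the naive invariant ``the sub-triangle below the frontier is query-free'' fails already at depth one: one mark placed in the overlap of the two children's triangles pollutes both, yet should not be allowed to advance the frontier. One must therefore replace the query-free-triangle invariant by a weaker but still sufficient one that the adversary can preserve, for instance maintaining two explicitly disjoint free descent lines from the frontier to two distinct sinks, or a weighted potential that charges each round's $s_l$ queries against at most $l$ levels of the surviving path while proving the adversary never runs out of a divergent pair. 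Making such an invariant simultaneously (i) robust to off-frontier and earlier-round queries and (ii) compatible with the $s_{l+1}>s_l$ counting is, I expect, the hard technical core; the upper bound and the one-round counting are comparatively routine.
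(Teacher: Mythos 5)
The statement you were asked to prove is Conjecture \ref{mainconj}: the paper itself has \emph{no} proof of it. What the paper does prove is exactly the two pieces you also split it into: the upper bound (Claim \ref{upperbound}, via the same recursive ``ask the depth-$l$ triangle below the frontier'' algorithm you describe, so that part of your proposal is correct and identical to the paper's), and a general lower bound $si_k(Py(n))\ge\lceil\frac{2}{k+1}n\rceil$ (Theorem \ref{pypathm}), which matches $\lceil n/l\rceil$ only when $l=2$, i.e.\ $k=s_l=3$. For $l\ge 3$ the conjecture is open, and the obstruction is precisely the one you flagged in your last paragraph.

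To make the gap concrete: your ``counting heart'' argues that since the closed depth-$l$ triangle below the frontier has $s_{l+1}=s_l+(l+1)$ vertices and only $s_l$ questions are asked, some descendant within distance $l$ is unqueried and the adversary can steer to it. But this counts only the \emph{current} round's queries. Over several rounds the querier can pipeline: spend part of each round's budget on vertices well below the current frontier, so that by the time the frontier arrives there, the relevant triangle is already densely marked and the frontier jumps by more than $l$ in that round. An amortized invariant is then required, and the natural one fails for the reason you note (a single deep query lies in the descendant triangles of many potential frontiers). The paper's adversary (both proofs of Theorem \ref{pypathm}) handles cross-round accumulation by a charging scheme --- at most one query per round extends $P_{max}$ immediately, and otherwise only \emph{pairs} of queries can force an edge of $P_{max}$ --- yielding advance at most $(k+1)/2$ per round; with $k=s_l$ this is roughly $l^2/4$, far weaker than the conjectured advance of $l$ once $l\ge 3$. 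So your proposal proves the upper bound, correctly identifies where the lower bound breaks, but does not close it; neither does the paper, which is why the statement is stated as a conjecture rather than a theorem.
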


The upper bound holds by a simple algorithm:

\begin{claim} \label{upperbound}
$si_{s_l}(Py(n))\le pa_{s_l}(Py(n))]\le\lceil n/l\rceil$ if
$s_l=1+2+\ldots+l$ for some $l$.
\end{claim}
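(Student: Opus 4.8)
The plan is to exhibit an explicit round-by-round strategy that determines the whole path (and hence the sink) in at most $\lceil n/l \rceil$ rounds, each round using at most $s_l = 1+2+\ldots+l$ queries. The guiding intuition comes from the fully adaptive case ($l=1$, $s_1=1$), where in each round we ask a single vertex and advance one level along the path. The natural generalization is to advance $l$ levels per round by querying, in a single round, a small triangular block of vertices of pyramid-shape with $1+2+\cdots+l$ vertices, arranged so that no matter which outgoing edges lie in $E^*$, the answers reveal the portion of the path through the next $l$ levels.

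First I would set up the induction on rounds. Suppose that after some rounds we have determined the path up to a vertex $v_{i,j}$ lying on level $i$ (initially $i=1$, $v=v_{1,1}$, determined for free). In the next round the plan is to ask exactly the vertices of the sub-pyramid of height $l$ rooted at $v_{i,j}$: namely $v_{i,j}$ itself on the current level, its two children $v_{i+1,j}, v_{i+1,j+1}$, their descendants on level $i+2$, and so on down to the $l$ vertices $v_{i+l,j}, \ldots, v_{i+l,j+l}$ reachable from $v_{i,j}$ within $l$ steps. The number of queried vertices is $1+2+\cdots+l = s_l$, matching the budget. The key step is the observation that these $s_l$ answers determine the path inside this sub-pyramid all the way down to level $i+l$: starting from the known vertex $v_{i,j}$, its answer tells us which child is next, that child was queried so its answer tells us the following vertex, and by induction within the round we trace the path through all $l$ levels, arriving at a uniquely determined vertex on level $i+l$.

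Next I would handle the termination and the ceiling. After $\lfloor n/l\rfloor$ full rounds we have determined the path down to level $1+l\lfloor n/l\rfloor$; if $l \mid n$ this is already level $n+1$ and we are done, otherwise one more round suffices to cover the remaining $n - l\lfloor n/l\rfloor < l$ levels (we simply query the correspondingly smaller sub-pyramid, which uses even fewer than $s_l$ queries), giving $\lceil n/l\rceil$ rounds in total. Since determining the path determines the sink, the same bound bounds $si_{s_l}$, which also follows from the trivial inequality $si \le pa$ stated in the excerpt.

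I do not expect a serious obstacle here, as this direction is the easy upper-bound half of the conjecture and is essentially a clean book-keeping argument. The only point requiring mild care is verifying the geometry: that the sub-pyramid of height $l$ rooted at $v_{i,j}$ genuinely contains exactly $s_l$ vertices and that every vertex the path might pass through in the next $l$ steps is among the queried ones, so that the within-round tracing never hits an unqueried vertex. This is immediate from the definition of $Py(n)$, since from $v_{i,j}$ the reachable vertices on level $i+m$ are precisely $v_{i+m,j},\ldots,v_{i+m,j+m}$, giving $m+1$ vertices and $\sum_{m=0}^{l-1}(m+1)=s_l$ in total. The genuinely hard part of the conjecture is the matching lower bound, but the present claim asks only for the upper bound, so the strategy above completes it.
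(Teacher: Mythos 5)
Your proof is correct and is essentially the paper's own argument: the paper phrases it recursively (ask the $s_l$ vertices on the first $l$ levels, then recurse on the sub-pyramid rooted at the vertex $u$ reached on level $l+1$), which unrolls to exactly your round-by-round strategy of querying the height-$l$ sub-pyramid below the current path endpoint. (Only a cosmetic slip in your description: the last queried level is $i+l-1$, containing the $l$ vertices $v_{i+l-1,j},\ldots,v_{i+l-1,j+l-1}$, not level $i+l$; your final count $\sum_{m=0}^{l-1}(m+1)=s_l$ already reflects the correct version.)
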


\begin{proof}
The algorithm is recursive. In the first round we ask the $s_l$
vertices that are on the first $l$ level, thus we will know the
first $l$ edges of the path and also we know the vertex $u$ on the
$(l+1)$st level that is on the path. Now take the new pyramid
graph with $n$ levels with root $u$, by recursion we can find
the path here in $\lceil(n-1)/l\rceil=\lceil n/l\rceil-1$
rounds. This path together with the first $l$ edges gives the path
we were looking for in the original pyramidal graph and we had
$1+\lceil n/l\rceil-1=\lceil n/l\rceil$ rounds as needed. To
start the recursion we need that if $n\le l$ then one round is
enough. This is trivially true as in one round we can ask all the
vertices that are not sinks, and so we can determine the path.
\end{proof}

The main result of this section is the following. We give a general lower bound that verifies Conjecture \ref{mainconj} for $l=2$ (i.e. $k=s_l=3$) and
solves the case $k=2$.

\begin{thm} \label{pypathm}
For arbitrary $k$, $pa_k(Py(n))\ge si_k(Py(n))\ge
\lceil\frac{2}{k+1}n\rceil$.
\end{thm}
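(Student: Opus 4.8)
The plan is to prove the equivalent reformulation $Q + T \ge 2n$, where $T$ is the number of rounds used and $Q$ is the total number of queries asked over all rounds. Indeed, since at most $k$ queries are asked per round we have $Q \le kT$, so $2n \le Q + T \le (k+1)T$ forces $T \ge \frac{2n}{k+1}$, and hence $T \ge \lceil \frac{2n}{k+1} \rceil$; the inequality $pa_k \ge si_k$ of the statement is immediate, so it suffices to bound $si_k$. Throughout I would run an adversary argument in which the adversary commits to the answer of each query at the moment it is asked, and the final path is the one forced by all the committed switches.

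As potential I would take $l$, the length of the longest full path in the current merged graph obtained by applying the merging operation of Section \ref{dag} after each answered query. By Theorem \ref{pasi} determining the sink is equivalent to reducing this graph until $l = 0$, and initially $l(Py(n)) = n$ since every internal vertex of the pyramid has outdegree $2$. The heart of the matter is the behaviour of $l$ across one round: I claim the adversary can answer so that, amortized over the round, the drop in $l$ is at most $\frac{q+1}{2}$, where $q \le k$ is the number of queries in that round. Summing over all rounds would then give $n = \sum_r (\text{drop in round } r) \le \sum_r \frac{q_r+1}{2} = \frac{Q+T}{2}$, that is $Q + T \ge 2n$.

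The adversary strategy realizing this bound maintains, after each answer, a specific longest full path $P^*$ and answers every queried vertex $x$ by the edge that keeps $P^*$ intact when $x \notin P^*$, and, when $x \in P^*$, by an edge steering the merge off the remainder of $P^*$ exactly as in the cases of the proof of Theorem \ref{pasi}. As long as the algorithm queries vertices off $P^*$, the length $l$ does not drop, and Observations \ref{obs3} and \ref{obs3b} guarantee that the vertices of $P^*$ are not accidentally absorbed. A drop in $l$ is forced only when the algorithm queries a vertex lying on every current longest path, or triggers a \emph{cascade} in which a vertex $z$ whose two children are already absorbed into one blob is itself forced to merge. Here the pyramid geometry supplies the factor two: since every vertex has outdegree exactly $2$, creating such an essential vertex or a cascade at $z$ requires two earlier merges that close off both alternatives around $z$, so each unit of forced shortening can be charged to two queries, with a single additive $+1$ per round accounting for the one place where a freshly queried vertex can sit on the longest path and shorten it directly.

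The step I expect to be the main obstacle is making this charging rigorous in the presence of \emph{pre-asked} vertices: queries from earlier rounds whose committed answers only later combine, through cascades, to shorten $l$, so that a drop realized in round $r$ may have been set up by queries from previous rounds. One must show that the adversary can choose its off-path answers consistently so that no query is charged twice and so that the total cascade-driven shortening never exceeds half of the queries that prepared it, together with the per-round $+1$ slack; verifying that the chosen longest path survives every merging step and that the pairing is robust against the adaptivity of the algorithm is the delicate part. The case $k=2$, where each round admits at most one off-path and one on-path query and the pairing is forced, is the clean illustration of why the bound is $\frac{2}{k+1}n$ rather than the trivial $\frac{1}{k}n$.
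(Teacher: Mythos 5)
Your reduction to the inequality $Q+T\ge 2n$ is exactly the accounting the paper's proof ends with (there it reads: at least $m+2(n-m)=2n-m$ questions in $m$ rounds), and your slogan of ``one direct extension per round, pairs of queries for everything else'' is the right intuition. But the proposal stops precisely where the proof has to start: the claim that the adversary can keep the amortized drop of your potential to $\frac{q+1}{2}$ per round \emph{is} the theorem, and you explicitly defer it (``the main obstacle'', ``the delicate part''). Worse, the concrete strategy you do sketch cannot deliver the factor $2$. Answering ``exactly as in the cases of the proof of Theorem \ref{pasi}'' only guarantees that each single query-and-answer lowers a longest-path potential by at most $1$; if the algorithm spends a whole round querying $q$ consecutive vertices of your protected path $P^*$, nothing in your description prevents a drop of $q$ rather than $\frac{q+1}{2}$, and the bound degenerates to the trivial $T\ge n/k$. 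The factor $2$ does not come from ``cascades requiring two earlier merges''; it comes from a per-round mechanism that your outline never specifies.

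That mechanism, in the paper, has three interlocking parts, all absent from your proposal: (i) within a round the adversary answers the $k$ queries \emph{bottom-up} (deepest level first), which forces that at most one query per round is the endpoint of the currently forced partial path $P_{max}$ at the moment it is answered (after that answer the endpoint moves to a deeper level than any remaining query of the round); (ii) every non-endpoint query is answered ``left''; (iii) the unique endpoint query $v$, with children $u$ and $w$, is answered toward the child whose already-determined continuation is \emph{shorter}, say $l_u\le l_w$. Then when $P_{max}$ grows by $1+l_u$ edges in that round, the $l_u$ extra edges are matched with $2l_u$ distinct earlier queries: $l_u$ on the chosen continuation and $l_u$ on the other one, which is at least as long and vertex-disjoint from it (both are all-left paths in adjacent columns of the pyramid). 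Since $P_{max}$ only moves downward, the levels of charged vertices strictly increase from round to round, so no query is ever charged twice --- this is exactly the double-charging/adaptivity worry you raise and do not resolve. Note finally that the paper's potential is the forced partial path from the root, not the longest path of the Section \ref{dag} merged graph; that choice is what makes the disjointness and monotonicity statements immediate, whereas with your merged-graph potential even the assertion that off-$P^*$ queries never lower $l$ would itself require an argument.
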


\begin{proof}
We give two different
proofs. While processing an algorithm which finds the path/sink, there is always a maximal partial
pyramid path $P_{max}$ that we know from the answers until now,
i.e. the path determined by the state of the already known
vertices (note that $P_{max}$ is changing by time). The basic idea
in both proofs is that in each round there is only one question
which immediately makes $P_{max}$ longer by one and for the rest
of the questions, only pairs of them can determine one more edge
in $P_{max}$. In both proofs the adversary has the following
answering scheme. In each round he answers for the $k$ asked
vertices in reverse order of their height i.e. he first answers
for the one which is on a level with a biggest index (if there are more asked vertices on
the same level, then their order does no matter), etc.. This way at most
one vertex per round is the endvertex of $P_{max}$ when it is
asked.

%\noindent
{\em First proof of the lower bound.} If a vertex is not an
endvertex of $P_{max}$ then we just answer left. If the asked
vertex $v$ is an endvertex of $P_{max}$ then we do the following.
Let $v$'s left child be $u$ and its right child be $w$. Compute
the length $l_u$ of the path starting from $u$ determined by the
already known states of vertices (when we reach a vertex with
unknown state, that's the end of the path, eg. it may be already
$u$ if we don't know $u$'s state). Similarly, the length of the
path starting at $w$ is $l_w$. Now we answer left for the state of
$v$ if $l_w\le l_u$ and right otherwise i.e. we choose to go in
the direction where the continuation of the path will be shorter.

When analyzing this method we just concentrate on the vertices
which at the stage when they are asked, are the endpoints of the
current $P_{max}$. As already mentioned, we consider only one such
point in each round. After our answer to a vertex $v$ the path
gets longer by at least one. If by more than one, then wlog. we
have chosen $u$, its left child and so $l_u\le l_w$, where $w$ is
its right child. As for non-endpoints of $P_{max}$ we always
answer left and the two paths starting at $u$ and $w$ contain only
such vertices, they are completely disjoint. I.e. for all but one
edge of the new $P_{max}$ determined in this round, we found two
vertices for which the state was asked already. As $P_{max}$ is
increasing, in each round we find new such pairs of vertices. It
is easy to see that doing this the sink is determined if and only
if the pyramid path is determined as well. Suppose now that after
$m$ rounds the whole pyramid path is determined, i.e. all $n$
edges of it. In each round there was at most one endvertex of
$P_{max}$ asked, which means that at most $m$ edges were
determined by them and for the rest of the edges we found two
asked vertices for each. Thus together there where at least
$m+2(n-m)=2n-m$ questions. Thus we had at least $m\ge
(2n-m)/k=(2n-m)/k$ rounds which implies $m\ge
\frac{2}{k+1}n$ as needed.
%If $k=2$ then it means that we had at least $m\ge (2n-2-m)/2=n-1-m/2$ rounds which implies $m\ge \frac{2}{3}(n-1)$ as needed. If $k=3$ then it means that we had at least $m\ge (2n-2-m)/3=\frac{2}{3}(n-1)-m/3$ rounds which implies $m\ge \frac{1}{2}(n-1)$ as needed.\\

%\noindent
{\em Second proof of the lower bound.} If a vertex $v$ is not an
endvertex when it is asked then we check if there is another
vertex $v'$ on the same level with known state. If there is, then
we give the same answer for $v$ as we gave for $v'$ and
additionally we tell that the state of every vertex on this level
is the same. Otherwise we give an arbitrary answer and also say
that either there will be one more asked vertex in this level or
we will avoid $v$. If $v$ is an endvertex of $P_{max}$, then we
determine the first level under it for which we did not tell the
state of every vertex on that level. On this level either there is
no asked vertex, then our answer to $v$ is arbitrary or there is a
vertex $q$ which was already asked. Now either answering left or
right to $v$ will make sure that the endvertex of the new
$P_{max}$ is on the same level as $q$ but a different vertex. Thus
the question when we asked $q$ became useless, wlog. we can assume
that there is no such $q$. This way questions which were
endvertices of $P_{max}$ determine one edge in $P_{max}$ and pairs
of the rest of the vertices determine a whole level, i.e. one edge
in $P_{max}$. It is easy again to see that doing this the sink is
determined if and only if the pyramid path is determined as well.
The same computation as in the first proof yields the desired
lower bounds.
\end{proof}

\begin{cor}
$pa_{2}(Py(n))=si_{2}(Py(n))=\lceil\frac{2}{3}n\rceil$.\\
$pa_{3}(Py(n))=si_{3}(Py(n))=\lceil\frac{1}{2}n\rceil$.
\end{cor}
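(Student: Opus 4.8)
The plan is to derive both equalities by sandwiching $si_k(Py(n))$ and $pa_k(Py(n))$ between matching lower and upper bounds, using the general inequality $si_k(G)\le pa_k(G)$ throughout. The lower bounds come for free from Theorem~\ref{pypathm}: setting $k=2$ gives $si_2(Py(n))\ge\lceil\frac{2}{3}n\rceil$, and $k=3$ gives $si_3(Py(n))\ge\lceil\frac{2}{4}n\rceil=\lceil\frac{1}{2}n\rceil$. For the upper bound when $k=3$ I would simply invoke Claim~\ref{upperbound} with $l=2$ (so $s_l=s_2=1+2=3$), which yields $pa_3(Py(n))\le\lceil n/2\rceil$; combined with the lower bound and $si_3\le pa_3$ this pins down $pa_3=si_3=\lceil\frac{1}{2}n\rceil$. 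The only genuinely new ingredient is a matching upper bound $pa_2(Py(n))\le\lceil\frac{2}{3}n\rceil$, since $2$ is not of the form $s_l$ and so Claim~\ref{upperbound} does not apply directly.

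To build the $k=2$ algorithm I would use a pipelined, speculative strategy that advances three levels every two rounds, exploiting the fact that in a pyramid two adjacent vertices on a level share a common child. Suppose we have already determined the path down to a vertex $v=v_{i,j}$. Its two children are $a=v_{i+1,j}$ and $b=v_{i+1,j+1}$, and these share the grandchild $d=v_{i+2,j+1}$, whose identity depends only on the position of $v$ and not on any unknown switch. In the first round of the block I would ask $v$ together with the shared grandchild $d$. The answer at $v$ reveals which of $a,b$ lies on the path; by symmetry assume it is $a$, whose children are $v_{i+2,j}$ and $d$. In the second round I would then ask $a$ together with its other child $v_{i+2,j}$. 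Now whichever child of $a$ the path takes, its switch is already known---$d$ from the first round, $v_{i+2,j}$ from the second---so after two rounds we know the path-vertex on level $i+3$, i.e.\ we have advanced three edges using four questions.

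Iterating this block starting from the root handles $3\lfloor n/3\rfloor$ of the levels in $2\lfloor n/3\rfloor$ rounds; the remaining $r=n-3\lfloor n/3\rfloor\in\{0,1,2\}$ levels are finished in at most $r$ extra rounds (for $r=1$ one further question suffices, and for $r=2$ one asks the current path-vertex together with one speculative child, spending at most two rounds). A short check of the three residue cases shows the total is exactly $\lceil\frac{2}{3}n\rceil$, giving $pa_2(Py(n))\le\lceil\frac{2}{3}n\rceil$ and hence $pa_2=si_2=\lceil\frac{2}{3}n\rceil$. The crux of the argument---and the step I expect to be the main obstacle---is verifying that the speculative block always gains three levels no matter how the adversary answers: the whole saving hinges on the shared grandchild $d$ being a child of both candidate vertices $a$ and $b$, so that the first-round question at $d$ is never wasted. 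Matching the boundary arithmetic to the ceiling for each residue of $n$ modulo $3$ is routine once the block is in place.
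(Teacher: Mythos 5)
Your proposal is correct and follows essentially the same route as the paper: lower bounds from Theorem~\ref{pypathm}, the $k=3$ upper bound from Claim~\ref{upperbound} with $l=2$, and for $k=2$ the identical two-round/three-level block (the paper asks $v_{1,1}$ and the shared grandchild $v_{3,2}$, then the revealed child $v_{2,1}$ and $v_{3,1}$, and recurses). Your residue-case arithmetic is a welcome explicit detail that the paper leaves implicit, but the underlying algorithm is the same.
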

\begin{proof}
Theorem \ref{pypathm} implies $si_{2}(Py(n))\ge \lceil\frac{2}{3}n\rceil$ and $si_{3}(Py(n))\ge\lceil\frac{1}{2}n\rceil$.
We now need to give algorithms for finding the paths, that achieve these bounds.
For $k=3$ the algorithm in Claim \ref{upperbound} can be applied.
For $k=2$ first we ask the root $v_{1,1}$ and $v_{3,2}$. Wlog. the
root is in left state. Now in the second round we ask $v_{2,1}$
and $v_{3,1}$. After these two rounds we will know the first $3$
edges of the path and then we can proceed by recursion (taking the
endvertex of this $3$ long path as the new root).
\end{proof}

Pyramid graphs can be easily generalized to
$d$-dimensions, see eg. the paper of Sun et al. \cite{sun} In this paper a pyramid graph
is represented on the non-negative part of the
$2$-dimensional grid with the origo being its root. In a similar
way a $d$-dimensional pyramid graph is a part of the
$d$-dimensional grid. In the following we give a generalization of pyramid paths that includes the $d$-dimensional pyramid path.
A {\em generalized pyramid graph} $GPy_d(n)$ is a directed graph having the following properties. $GPy_d(n)$ has its vertices on $n+1$ levels such that the first level has one source vertex and the last level contains only sinks. From any vertex $v_i$ on level $i\le n$, there are $d$ outgoing edges to level $i+1$, and between each two levels $i$ and $i+1$ there is a matching $L_i$ that matches level $i$ to level $i+1$. As a consequence, on each non-first level there are at least $d$ vertices.

\begin{thm} \label{gpypathm}
For any generalized pyramid graph $GPy_d(n)$ and arbitrary $k$, $pa_k(Py(n))\ge si_k(Py(n))\ge
\lceil\frac{d}{k-1+d}n\rceil$.
\end{thm}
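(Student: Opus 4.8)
The plan is to generalize the first proof of Theorem~\ref{pypathm}, since the claimed bound $\lceil \frac{d}{k-1+d}n\rceil$ specializes exactly to the pyramid bound $\lceil\frac{2}{k+1}n\rceil$ when $d=2$. The only extra structure that $GPy_d(n)$ offers over an arbitrary layered graph is the matchings $L_i$, and these play precisely the role that the left edges played in $Py(n)$: following matching edges from distinct vertices of one level keeps the resulting paths disjoint, because each $L_i$ is an injection from level $i$ into level $i+1$. So I would designate, for every non-sink vertex, its $L_i$-edge as the ``default'' answer, replacing ``answer left'' from the pyramid argument.

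Since $si_k\le pa_k$ (determining the path determines the sink, cf.\ Observation~\ref{trivi}), it suffices to bound $si_k(GPy_d(n))$ from below by an adversary argument. The adversary answers the $k$ vertices of a round in decreasing order of level, so that at most one vertex per round is the current endpoint of $P_{max}$ when it is answered. For a vertex that is not the endpoint of $P_{max}$ the adversary returns its matching edge $L_i$. For the (at most one) endpoint $v$ of $P_{max}$, the adversary computes, for each of the $d$ children, the length of the continuation path determined by the already known states (following matchings where states are still unknown), and answers so as to move into the child with the \emph{shortest} such continuation.

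The counting then mirrors the pyramid case. Call an edge of the final path $P$ of \emph{type A} if it is the edge $vu$ created when $v$ was the endpoint answered in its round; there is at most one such edge per round, hence at most $m$ of them if $m$ rounds are used. Every other edge of $P$ is a continuation edge that was already determined when it joined $P_{max}$. Fix such a continuation edge, appearing at depth $s$ below some branching vertex $v$ whose chosen continuation has length $\ell_u$; since $u$ was the shortest direction, each of the $d-1$ rejected children $w_1,\dots,w_{d-1}$ has a continuation of length at least $\ell_u\ge s$, so each rejected branch contributes a vertex at the corresponding depth. Together with the tail of the continuation edge itself (which lies on $P$), this assigns $d$ distinct asked vertices to every continuation edge. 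Writing $a\le m$ for the number of type A edges, the total number of questions is therefore at least $a+d(n-a)=dn-(d-1)a\ge dn-(d-1)m$; combined with the trivial upper bound $mk$ on the number of questions in $m$ rounds this gives $mk\ge dn-(d-1)m$, i.e.\ $m\ge \frac{d}{k-1+d}n$, which forces $m\ge\lceil\frac{d}{k-1+d}n\rceil$ as required.

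The step I expect to be the real work is justifying that the $d$ asked vertices assigned to the various continuation edges are genuinely distinct, with no double counting either inside a round or across rounds. Inside a round this is immediate from the injectivity of the matchings: the chosen and rejected branches start at the $d$ distinct children of $v$ on one level and, being composed of vertices strictly below the current endpoint, consist only of vertices that were never themselves endpoints and so were answered by their matching edges; injective matchings keep such branches pairwise disjoint forever and keep the rejected branches off $P$. The more delicate point is the global statement: one must use monotonicity of $P_{max}$ (it only ever grows, so a vertex once strictly below the current endpoint can neither have been nor become the object of a type A answer) to rule out that a rejected branch of one round reuses a vertex charged in another round or meets a portion of $P$ fixed in a different round. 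I would isolate this disjointness as a lemma and prove it first, after which the counting above follows routinely. The second, level-based proof of Theorem~\ref{pypathm} should adapt in the same spirit, with ``a pair of vertices determines a level'' replaced by ``$d$ vertices determine a level''.
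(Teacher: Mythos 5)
Your approach is correct, and the paper itself asserts that both proofs of Theorem~\ref{pypathm} generalize; but the proof the paper actually writes out is the generalization of the \emph{second} (level-based) argument, whereas you generalize the \emph{first} (shortest-continuation charging) argument. In the paper's version, the matching edges are used so that the adversary can commit whole levels: a non-endpoint question is always answered by its matching edge, and either a level accumulates $d$ asked vertices (whereupon the adversary declares the entire level to be in matching state) or the question is wasted; an endpoint question is steered, again by injectivity of the matchings, so that the new endpoint of $P_{max}$ is an unasked vertex on the first uncommitted level. Thus one question per round yields one edge and only $d$-tuples of the remaining questions yield further edges, giving the same inequality $mk\ge a+d(n-a)\ge dn-(d-1)m$ as your charging scheme. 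What the paper's route buys is lighter bookkeeping: progress is measured level by level, so there is no global disjointness of charges to verify. What your route requires is exactly the lemma you isolate, and your sketch of it is right; moreover the across-round half is easier than you suggest: every vertex charged in a given round lies on a level strictly above the level of the endpoint of $P_{max}$ at the end of that round, while every vertex charged in any later round lies on that level or below it, so charges from different rounds can never collide, and within a round the $d$ branches are pairwise disjoint by matching injectivity, as you say.

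One point you must not leave implicit, since the claimed bound is on $si_k$ and not only on $pa_k$: your counting assumes that when the algorithm stops, all $n$ edges of the final path have entered $P_{max}$, i.e.\ that the sink stays undetermined as long as the path is incomplete. This does hold for your strategy (the paper records the analogous fact in its first proof of Theorem~\ref{pypathm}): if $P_{max}$ is incomplete, its endpoint has unknown state, its $d$ children are distinct, and extending from any two of them by matching edges is consistent with all answers given so far (every known state strictly below the endpoint is a matching edge), yet produces disjoint paths ending in different sinks. A sentence to this effect is needed to close the argument.
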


\begin{proof}
Both proofs of Theorem \ref{pypathm} easily generalize to this setting. Here we present a proof using the second method. We refer to the edges of all the $L_i$'s as left edges.  If a vertex $v$ is not an
endvertex when it is asked then we check if there are at least $d-1$ another
vertices on the same level with known state. If no, then we give answer left (i.e. the edge from the appropriate matching) and also say
that either there will be one more asked vertex in this level or
the final path won't go through $v$. If there are at least $d-1$ vertices already on this level with known state, then
we again give answer left for $v$ and
additionally we tell that the state of every vertex on this level
is left. If $v$ is an endvertex of $P_{max}$ which is on level $i$, then we
determine the first level $j$ under it for which we did not tell the
state of every vertex on that level. If on this level there are
at most $d-1$ asked vertices, then our answer to $v$ is such that the new endvertex of $P_{max}$ is a vertex on level $j$ that was not yet asked. This can be done, as the at least $d$ different choices for the state of $v$ all yield to different endvertices on all the levels from $i$ to $j$, as on every level every known state is a matching edge and the matching edges never go to the same vertex.

It is again easy to conclude that in each round there is only one question
which immediately makes $P_{max}$ longer by one and for the rest
of the questions, only $d$-tuples of them can determine one more edge
in $P_{max}$. Simple computation gives the lower bound $si_k(GPy_d(n))\ge\frac{d}{k-1+d}n$.
\end{proof}

We also remark that Theorem \ref{gpypathm} cannot be improved as there are generalized pyramid graphs for every $d$ where equality holds. Indeed, take the $n+1$ level generalized pyramid graph having $d$ vertices on each non-first level for which every non-sink vertex is connected to every vertex on the next level. This graph is uniquely determined by $n$ and $d$. Now in this graph an obvious algorithm is to ask in each round the endvertex of $P_{max}$ and as many complete levels under it as possible. This way in each round $P_{max}$ gets longer by $1+\lfloor (k-1)/d\rfloor=\lfloor (k-1+d)/d\rfloor$, thus we can determine the path in at most $\lceil n/(\lfloor \frac{k-1+d}{d}\rfloor) \rceil$ rounds, If $k-1$ is divisible by $d$ then this upper bound matches the lower bound of Theorem \ref{gpypathm}.

%Finally for $k=4$ we give the following upper bound.
%
%\begin{thm}
%$pypa_s(4,n)\le pypa(4,n)\le \lceil\frac{4}{9}(n-1)\rceil$.
%\end{thm}

%\section{Proofs}
\section{Remarks}

Let us consider again the problem of $pa_k(G)$ and $si_k(G)$ for
directed acyclic graphs. Obviously the same preprocessing as in
the case $k=1$, replacing $G$ by $G'$ or $G''$, is useful in
general. From now on we suppose that $G$ does not have vertices
with outdegree 1 (which is always true for $G'$ and $G''$).

One could ask how far $si(G)$ and $si_k(G)$ (or $pa(G)$ and
$pa_k(G)$) can be. Obviously $si(G)\le k si_k(G)$ as the same at
most $k si_k(G)$ questions which were used to find the sink in the
case of $si_k(G)$ could be used one-by-one to find the sink in the
case of $si(G)$. An example where this bound is achieved is the
graph $H_l$ consisting of a directed path of length $kl$ with each
vertex on the path having another out-neighbor, which is a sink.
More precisely let $x_1, \dots, x_{kl+1}$ be vertices of $H_l$
such that $x_ix_{i+1}\in E$ for every $i \le kl$. Additionally,
every $x_i$ with $i \le kl$ has a child, which is a sink. One can
easily see that in the worst case (that can be forced by the
adversary) the sink is found if and only if $x_1, \dots, x_{kl}$
have been asked, hence $si(H_l)=kl$ and $si_k(H_l)=l$. The same is
true if we want to find the path.

An example where $si(G)$ and $si_k(G)$ (or $pa(G)$ and $pa_k(G)$)
are close to each other is the complete $k$-ary tree from Section
\ref{tree}. It follows easily from Theorem \ref{treethm} that
$si(T_k(n))=si_k(T_k(n))=pa(T_k(n))=pa_k(T_k(n))=n$.

More generally, one could ask how far $si_m(G)$ and
$si_k(G)$ (or $pa_m(G)$ and $pa_k(G)$) can be for any constants $m\le k$. Obviously
$si_m(G)\ge si_k(G)$, and similarly to the arguments used in the
case $m=1$, $si_m(G)\le \lceil k/m \rceil si_k(G)$.

One can easily construct a graph where this bound is achieved. We
just mention the main ideas without details. Let us consider a
$k$-ary tree with $l$ levels and replace each vertex with a copy
of $H_1$. The trivial algorithm is to go through the copy of $H$
corresponding to the source of the tree, then in the appropriate
child of it, and so on. This gives $si_k(G)\le pa_k(H')\le l$ and
$si_m(H')\le \lceil k/m \rceil l$. On the other hand the method of
the adversary can be the following: if a vertex is asked and it is
not in the upper-most copy of $H_1$, the path won't even go into
that copy of $H_1$ which contains this vertex. It shows that there
is equality in the previous inequalities.

%k/m \rceil l$

%An example where this bound is achieved is the graph $H'$, which
%is defined the following way. It consists of a copy of $H_1$ (see
%previous remark for the definition), but $x_{k}$ has additionally
%$k$ children, each of them being a source in a new copy of $H_1$,
%and so on. Continue this till there are $lk$ levels in $H'$ (it
%means $1+k+k^2+\dots +k^{l-1}$ copies of $H_1$). {\bf H' defjet en
%nem ertem, megbeszelhetjuk, mindenesetre erthetobbe kene tenni}

%One can easily see that $si_k(H')=pa_k(H')=l$. The algorithm is
%that we ask $x_1, x_2, \dots, x_k$ in the first round, then we can
%go and ask every important vertices from another copy of $H_1$ in
%the next round, and so on. On the other hand $si_m(H')\ge \lceil
%k/m \rceil l$. This lower bound, and the lower bound for
%$si_k(H')$ both come from the following method of the adversary:
%if a vertex is asked and it is not in the upper-most copy of
%$H_1$, the path won't even go into that copy of $H_1$ which
%contains this vertex.

Summarizing:
\begin{claim}
For arbitrary $m\le k$ $si_m(G)\le \lceil k/m \rceil si_k(G)$ and there are infinitely many $G$ graphs for which equality holds.
\end{claim}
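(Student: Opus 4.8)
The plan is to establish the two parts of the claim separately: the upper bound $si_m(G)\le \lceil k/m\rceil si_k(G)$ holds for \emph{all} graphs $G$ (under our standing assumption that every non-sink vertex has outdegree at least $2$), while the tightness requires exhibiting an explicit infinite family. For the upper bound, I would argue exactly as in the special case $m=1$ mentioned earlier in the excerpt. Suppose we have an optimal strategy using rounds of $k$ questions each, so that it finds the sink in $si_k(G)$ rounds. The total number of questions asked, in the worst case, is at most $k\cdot si_k(G)$, but more to the point, the $k$ questions asked in any single round of the $k$-strategy can be replayed in $\lceil k/m\rceil$ consecutive rounds of an $m$-strategy, simply by partitioning the $k$ questions into groups of size at most $m$. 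Since the answers in the $k$-strategy for the questions in one round do not depend on each other (they are all asked simultaneously and answered by the adversary), splitting them across several $m$-rounds loses no information and only increases the round count by the factor $\lceil k/m\rceil$. This yields $si_m(G)\le \lceil k/m\rceil\, si_k(G)$, and the identical argument works for $pa$.

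For the tightness, I would make precise the construction sketched in the paragraph preceding the claim. Start with the complete $k$-ary tree $T_k(l)$ on $l$ levels, and replace each vertex by a copy of the gadget $H_1$ (the directed path of length $k$ with a pendant sink hanging off each path vertex, as defined in the Remarks). The copies are linked along the tree edges so that exiting one copy of $H_1$ leads into the copy corresponding to a child. For the upper bounds, the trivial level-by-level algorithm traverses one copy of $H_1$ per tree-level on the active path, giving $si_k(H')\le pa_k(H')\le l$ with $k$ questions per round, and $si_m(H')\le \lceil k/m\rceil\, l$ with $m$ questions per round, since each $H_1$ copy of length $k$ takes $\lceil k/m\rceil$ rounds of $m$ questions to resolve.

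The crux is the matching lower bound $si_m(H')\ge \lceil k/m\rceil\, l$, which forces equality in both inequalities simultaneously and thereby shows the claim is sharp for the whole range $m\le k$. Here I would formalize the adversary strategy indicated in the excerpt: whenever a vertex is queried that lies in a copy of $H_1$ \emph{other} than the topmost unresolved one, the adversary commits to a path that never enters that copy, rendering the query useless. This confines all productive queries to the single active copy of $H_1$ at each stage. Within a single copy of $H_1$, by the analysis of $si$ on $H_1$ itself one needs to identify all $k$ of the path-vertices $x_1,\dots,x_k$ in the worst case, and with only $m$ questions per round this demands at least $\lceil k/m\rceil$ rounds per copy. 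Since there are $l$ copies stacked along the active root-to-leaf path of the tree, and the adversary can force them to be resolved one after another, the total is at least $\lceil k/m\rceil\, l$ rounds. The main obstacle I anticipate is making the "useless query" argument fully rigorous: one must verify that the adversary can always consistently redirect the committed path away from any premature copy without contradicting earlier answers, which relies on the branching of the $k$-ary tree providing enough escape routes at every junction. Once that consistency is checked, the bounds meet and the claim follows.
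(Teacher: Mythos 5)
Your upper bound $si_m(G)\le\lceil k/m\rceil si_k(G)$ by splitting each round of an optimal $k$-strategy into $\lceil k/m\rceil$ rounds of $m$ questions is correct, and it is exactly the paper's argument. The problem is the tightness part, and it sits precisely at the obstacle you yourself flagged: the ``useless query'' adversary cannot be made consistent, and in fact the lower bound $si_m(H')\ge\lceil k/m\rceil l$ that you assert (following the paper's sketch) is \emph{false} whenever $m$ does not divide $k$. Take $k=3$, $m=2$, $l=2$, with the natural wiring: the root copy has path vertices $x_1,x_2,x_3$ and path-end $x_4$ (a sink), and the pendant child of $x_i$ is the root of the child copy $C_i$. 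The questioner finishes in $3$ rounds, not $\lceil 3/2\rceil\cdot 2=4$: ask $x_1,x_2$; if either answer branches into $C_1$ or $C_2$, only one fresh copy of $H_1$ remains, costing $\lceil 3/2\rceil=2$ further rounds; if both answers are ``continue'', next ask $x_3$ \emph{together with} the first path vertex $y_1$ of $C_3$ --- if $x_3\to x_4$ the sink is found, and if $x_3\to C_3$ then the answer to $y_1$ is productive and one more round resolves $C_3$. The structural reason pipelining cannot be stopped is that once the current copy is down to one unknown path vertex, the path has only two possible continuations: one specific child copy, or a sink. A speculative question in that child copy cannot be ``routed away'', because routing away means answering toward the sink, which ends the search immediately. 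Hence rounds per copy do not multiply as $\lceil k/m\rceil\cdot l$; counting questions gives only about $\lceil kl/m\rceil$, which is strictly smaller when $m\nmid k$. Since $si_k(H')=l$, equality fails for this family, so your (and the paper's) construction does not prove the claim for general $m\le k$.

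To be fair, the paper itself only says ``we just mention the main ideas without details'', and its sketch has the same defect; your elaboration makes it visible. The claim itself is still true, but it needs a different witness family. The cleanest repair: for $t=1,2,\dots$ let $G_t$ be the path $x_1x_2\cdots x_{k+1}$ in which every $x_i$ with $i\le k$ has $t$ pendant sink children. Against the adversary who always answers ``continue'', the sink is undetermined as long as some $x_i$ is unasked, so $si_m(G_t)=\lceil k/m\rceil$ while $si_k(G_t)=1$; this gives infinitely many graphs with equality, for every $m\le k$. (When $m\mid k$ the paper's tree construction can also be salvaged, since then $\lceil k/m\rceil l=kl/m$ is a pure question count and pipelining gains nothing; but for $m\nmid k$ it is irreparably short of the target, as the example above shows.)
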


For larger $k$, one can easily improve the trivial
(and optimal) algorithm we mentioned in Section \ref{dag} for
$k=1$. At first we ask the source, all its out-neighbors, every
vertex which can be reached from the source in a path of length two
and so on. If there is an $i$ for which we cannot ask every vertex
which can be reached from the source in a path of length $i$, then
we ask as many as we can, chosen arbitrarily. Then the answers
show the beginning of the path $P(E^*)$. We repeat this procedure
starting with the last vertex which is surely in $P(E^*)$. This
algorithm (let us call it Algorithm A) finds the sink and the path
too, using at most $l(G)$ questions.

Clearly a smarter algorithm cannot be more than $k$ times faster
than this trivial one for a graph (without vertices of outdegree
1), as it would mean $si_k(G)<l(G)/k=si(G)/k$.

Now for any $k$ we show a graph $G$ and algorithm which can
achieve this bound (depending on how the arbitrary vertices are
chosen in the trivial algorithm). Let $x_1, \dots, x_{kl+1}$ be
vertices of the graph such that $x_ix_{i+1}\in E$ for every $i \le
kl$. Additionally, every $x_i$ with $i<kl$ has $k-1$ children,
each of them having two children, and $x_{kl}$ has $k-1$
additional children. These additional vertices are all distinct,
hence the graph is a tree. Algorithm $A$ asks $x_1$ and $k-1$
of its children in the first turn. It is possible that it does not ask
$x_2$. Suppose the path $P(E^*)$ goes to $x_{kl+1}$. If the
arbitrary vertices are always chosen the worst possible way, than
$kl$ turns are needed (even if they are chosen smarter, at least
$kl/2$ turns are needed).

However, consider the following Algorithm $B$. At the first turn
we ask $x_1, \dots, x_k$. If the path $P(E^*)$ does not go to
$x_{k+1}$, then we need to ask one more questions to finish the
algorithm, otherwise we continue with $x_{1+1}, \dots$ $\dots, x_{2k}$,
and so on. One can easily see that Algorithm $B$ finishes after at
most $l$ turns.

Summarizing (we denote by $si_A(G)$ the number of steps in which Algorithm A finds the sink):
\begin{claim}
$si_k(G)\ge si_A(G)/k$ and there are infinitely many $G$ graphs for which equality holds.
\end{claim}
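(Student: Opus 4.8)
The plan is to treat the inequality and the tightness example separately, and to obtain the inequality by chaining two facts already available. For the inequality, first I would note that Algorithm A, by construction, always begins each round by querying the current endpoint of the prefix of $P(E^*)$ it has already determined, so after each round that prefix grows by at least one edge; since $P(E^*)$ has length at most $l(G)$, this gives $si_A(G)\le l(G)$ for every $G$, and in fact for every tie-breaking rule (including the worst one), which is the form I will need. Second, Theorem \ref{pasi} gives $si(G)=l(G)$ under our standing hypothesis that $G$ has no outdegree-$1$ vertex, and combined with the earlier observation that $si(G)\le k\,si_k(G)$ this yields $l(G)\le k\,si_k(G)$. Stringing the two together,
\[
si_A(G)\le l(G)\le k\,si_k(G),
\]
which is exactly $si_k(G)\ge si_A(G)/k$.

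For the infinite tight family I would use the trees already described: a directed main path $x_1,\dots,x_{kl+1}$ in which each $x_i$ with $i<kl$ is given $k-1$ extra children, each carrying two sink-children, and $x_{kl}$ is given $k-1$ extra sink-children. First I would check the hypotheses and the relevant parameter: every non-sink vertex has outdegree at least $2$ (the $x_i$ have outdegree $k\ge 2$, the off-path children have outdegree $2$), and a short case check shows the longest path has length exactly $kl$, so $l(G)=kl$. The upper bound $si_k(G)\le l$ then comes from Algorithm B, which probes $k$ consecutive main-path vertices per round and needs only one extra round once the path leaves the main path; the matching lower bound $si_k(G)\ge l(G)/k=l$ is the general inequality applied to this $G$, so $si_k(G)=l$. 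Finally I would argue $si_A(G)=kl$: with worst tie-breaking, in each round Algorithm A spends its budget on the current main-path vertex together with $k-1$ of its off-path children and never on the next main-path vertex, so an adversary answering along the main path forces the determined prefix to grow by exactly one edge per round. Hence $si_A(G)=kl=k\cdot l=k\,si_k(G)$, equality holds, and letting $l$ range over all positive integers produces infinitely many such $G$.

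The routine parts are genuinely routine; the one place I expect to need care is establishing $si_A(G)=kl$ rather than merely $si_A(G)\le l(G)=kl$. The subtle point is that the worst tie-breaking must be shown to be compatible with a legitimate adversary strategy: when Algorithm A declines to ask $x_{i+1}$, the adversary can indeed keep $E^*$ on the main line by answering $x_ix_{i+1}$, so Algorithm A learns only that single edge in the round and extracts nothing usable from the off-path children it did query. I would also double-check that the off-path gadgets create no path longer than $kl$ (otherwise $l(G)=kl$, and with it the lower bound $si_k(G)\ge l$, would fail) and that the children of $x_{kl}$ are sinks, so that no outdegree-$1$ vertex is introduced and the standing hypothesis is preserved.
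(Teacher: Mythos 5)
Your inequality argument is correct and is precisely the paper's: each round of Algorithm A starts at the endpoint of the prefix of $P(E^*)$ determined so far, so $si_A(G)\le l(G)$, and Theorem \ref{pasi} together with $si(G)\le k\,si_k(G)$ gives $l(G)=si(G)\le k\,si_k(G)$; chaining these yields $si_k(G)\ge si_A(G)/k$. The tightness family is also the paper's, but your analysis of it fails at the very step you marked as routine, namely $si_k(G)\le l$. Algorithm B does \emph{not} finish within $l$ rounds in the worst case: the adversary can answer along the main path in rounds $1,\dots,l-1$ and deviate only in round $l$, at a queried vertex $x_i$ with $(l-1)k<i<kl$; the off-path child $c_i$ it steers into is not a sink (it has two sink children --- exactly the feature that makes Algorithm A waste queries on it), so its state is still needed and an $(l+1)$st round is forced. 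Worse, no algorithm beats this on your graph: since the graph is a tree, knowing the sink is the same as knowing the whole path, so any algorithm finishing in $l$ rounds against the all-main-line adversary must spend its entire budget of $kl$ questions on exactly the main-path vertices $x_1,\dots,x_{kl}$; the adversary then agrees with the main line through round $l-1$ and deviates in round $l$ at some queried $x_i$ with $i<kl$ (one exists because $k\ge 2$), leaving both children of the never-queried $c_i$ as consistent sinks. Hence $si_k(G)=l+1$ while $si_A(G)/k=l$, your closing chain $si_A(G)=kl=k\,si_k(G)$ breaks, and this family witnesses only $si_A(G)/si_k(G)=kl/(l+1)\to k$, i.e.\ asymptotic tightness rather than equality.

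In fairness, this gap is inherited from the paper itself, which asserts that Algorithm $B$ ``finishes after at most $l$ turns''; that assertion is false for the same reason, so the paper's own justification of the equality part is equally incomplete. But a blind proof cannot lean on that, and the defect is not cosmetic: any repair must alter the construction so that a last-round departure from the main path immediately determines the sink (off-path children that are sinks, as in $H_l$) or funnels back to the same sink, while still forcing Algorithm A to waste $k-1$ questions per round --- requirements your (and the paper's) tree cannot satisfy simultaneously. As a minor remark, the place you flagged as delicate, $si_A(G)=kl$ under worst tie-breaking, is in fact unproblematic; the trouble lies entirely on the $si_k$ side.
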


Also, as we noted before, our proof that $pa(G)=l(G)$, if there is
no vertex with outdegree $1$, can be interpreted even for graphs
containing cycles, yet we could not prove such a claim for $si(G)$
if $G$ contains a cycle. Even the question is not clear in this
case. We could ask for the sink or cycle where the flow ends,
analogously to the acyclic case. On the other hand if we want to know
every edge of the cycle, it is more similar to $pa(G)$. A possible
goal could be to determine the sink or the last vertex
before/after creating a cycle in the flow.

\begin{problem} \label{cyclesink}
Give an efficient algorithm to determine in this sense $si(G)$ if
$G$ contains a cycle.
\end{problem}

 The most interesting open problem is still to
determine $pa_k(Py(n))$ for every $k$ and $n$ or at least to prove
Conjecture \ref{mainconj}. Further, our results suggest that the
following might be true.

\begin{problem}
Is it true that $pa_k(Py(n))=si_k(Py(n))$ for every $k$ and $n$?
\end{problem}

In a paper of Sun et al. \cite{sun} a very similar problem was investigated.
In their version of the problem, in one question we can ask for a
vertex if it is on the path or not. Let us call the minimal number
of questions for this version $pa'_k(G)$ and $si'_k(G)$. For pyramid graphs,
the completely adaptive (i.e. one questions per round) version,
similarly to our problem,  we need  $n$ rounds. However, they do
not regard the version when we can ask more questions per round.
It is trivial that a question of our kind can be emulated by $3$
questions of their kind (asking the vertex and also its $2$
outgoing neighbors), thus $pa_k(Py(n))\le 3pa'_k(Py(n))$ and
$pa(Py(n))\le 3si'_k(Py(n))$. It would be interesting to know
more about these two new functions.

What Sun et al. investigate is that for $pa'$ and $si'$, eg. for pyramid graphs
algorithms using randomization can find the path much faster than
deterministic ones. This might show one major difference between these
two sets of problems, as in the version we regard, randomization
does not seem to help much. One possible intuition behind this
difference is that a left/right answer just gives a relative
information, which might be completely useless to determine our
path, whereas in their case any answer gives some information
about the path, i.e. whether it goes through that vertex or not.

Also, in their paper this was a major tool to give bounds to
various Local Search Problems. It would be interesting to see
whether our version has similar theoretical applications.\\

\noindent {\bf Acknowledgement.} We thank G\'abor Wiener
for communicating the problem to us.


\begin{thebibliography}{99}

\bibitem{brugmann}

B. Br\"ugmann, Monte Carlo Go, Technical report, Physics Department,
Syracuse University, New York, USA, 1993.

\bibitem{montecarlo}



Fishman, George S. Monte Carlo. Concepts, algorithms, and
applications. Springer Series in Operations Research.
Springer-Verlag, New York, 1996.

\bibitem{ksz}

L. Kocsis and C. Szepesv\'ari, "Bandit based Monte-Carlo Planning,"
in 15th European Conference on Machine Learning (ECML 2006), pp.
282-293, 2006.


\bibitem{sun}
X.~Sun, A.~Chi-Chih~Yao,
On the Quantum Query Complexity of Local Search
in Two and Three Dimensions,
{\sl Algorithmica} {\bf 55} (2009) 576–-600.




\end{thebibliography}
\end{document}